\documentclass[article, 12pt]{amsart}

\RequirePackage[OT1]{fontenc}
\RequirePackage{amsthm,amsmath,natbib, amssymb, mathrsfs, graphicx, subfigure}

\newtheoremstyle{theorem}%name
{10pt} % space above
{10pt} % space below
{\sl} % bofy font
{\parindent} % ident - empty=no indent, \parindent= paragraph indent
{\bf} % thm head font
{. } % punctuation after thm head
{ } % space after thm head: `` ``=normal \newline=linebreak
{} % thm head specification
\theoremstyle{theorem}
\newtheorem{theorem}{Theorem}
\newtheorem{corollary}{Corollary}
\newtheorem{lemma}{Lemma}
\newtheorem{proposition}{Proposition}

\newtheorem{remark}{Remark}

\newcommand{\floor}[1] { [ #1 ] } 
\newcommand{\E} { \mathbb{E} } 
 
\newcommand{\var} { \text{Var} \, }
\newcommand{\Cov} { \operatorname{Cov} \, }

\DeclareMathOperator*{\argmin}{argmin}

\makeatletter
\def\imod#1{\allowbreak\mkern10mu({\operator@font mod}\,\,#1)}
\makeatother

\def\eps{\varepsilon}
\def\d{{\rm d}}

\newcommand{\sumin} { \sum_{i=1}^n }

\newcommand{\inv} { {-1} }

\newcommand{\nasconv} {\xrightarrow[n \rightarrow \infty]{\text{a.s.}} }
\newcommand{\nasconvstar} {\xrightarrow[n \rightarrow \infty]{\text{a.s.}*} }

\newcommand{\mpconvstar} { \xrightarrow[n \rightarrow \infty]{\mathscr{P}*} }

\newcommand{\nwconvstar} { \xrightarrow[n \rightarrow \infty]{\mathscr{W}*} }
\newcommand{\epswconv} { \xrightarrow[\eps \rightarrow 0^+]{\mathscr{W}} }
\newcommand{\epswconvstar} { \xrightarrow[\eps \rightarrow 0^+]{\mathscr{W}*} }

\title[On the errors committed by sequences of estimator functionals]{On the errors committed \\ by sequences of estimator functionals}
\author{Steffen Gr\o nneberg}
\address{Department of Mathematics\\
 University of Oslo\\
P.O. Box 1053 Blindern\\
N-0316 Oslo\\
Norway}
\email{steffeng@math.uio.no}
\author{Nils Lid Hjort}
\address{Department of Mathematics\\
University of Oslo\\
P.O. Box 1053 Blindern\\
N-0316 Oslo\\
Norway}
\email{nils@math.uio.no}

\keywords{The last $n$, Hadamard-differentiable statistical functionals, Sequential confidence regions, Gaussian processes, the Nelson-Aalen estimator}

\begin{document}

\begin{abstract}
Consider a sequence of estimators $\hat \theta_n$ which converges almost surely to $\theta_0$ as the sample size $n$ tends to infinity. Under weak smoothness conditions, we identify the asymptotic limit of the last time $\hat \theta_n$ is further than $\eps$ away from $\theta_0$ when $\eps \rightarrow 0^+$. These limits lead to the construction of sequentially fixed width confidence regions for which we find analytic approximations. The smoothness conditions we impose is that $\hat \theta_n$ is to be close to a Hadamard-differentiable functional of the empirical distribution, an assumption valid for a large class of widely used statistical estimators. Similar results were derived in Hjort and Fenstad (1992, Annals of Statistics) for the case of Euclidean parameter spaces; part of the present contribution is to lift these results to situations involving parameter functionals. The apparatus we develop is also used to derive appropriate limit distributions of other quantities related to the far tail of an almost surely convergent sequence of estimators, like the number of times the estimator is more than $\eps$ away from its target. We illustrate our results by giving a new sequential simultaneous confidence set for the cumulative hazard function based on the Nelson--Aalen estimator and investigate a problem in stochastic programming related to computational complexity.
\end{abstract}

%% \centerline{heute} 

\maketitle

%% \centerline{\bf March 2010} 

\section{Introduction and summary} \label{section::intro}
Let $(\Omega, \mathcal{A}, P)$ be a probability space and $P_n$ be the empirical distribution based on the first $n$ observations from an infinite $iid$ sample $X_1, X_2, \ldots$ from $P$ living on some space $\mathcal{X}$.
That is, let
\[
P_n := \frac{1}{n} \sum_{i=1}^n \delta_{X_i}
\]
be the seemingly na\" ive estimator of the distribution function $P$ -- which puts a point mass $1/n$ on every observed value in $\mathcal{X}$. Although $P_n$ can never converge as a measure to $P$ uniformly over the whole of $\mathcal{X}$ unless $P$ is discrete, one can measure closeness between $P_n$ and $P$ relative to a set of mappings $\mathcal{F}$ from $\mathcal{X}$ to $\mathbb{R}$ by perceiving $P_n$ as an element of $l^\infty(\mathcal{F})$ evaluated as
\[
P_n(f) := \int f \, \d P_n = \frac{1}{n} \sumin f(X_i).
\] 
Likewise, one perceives $P$ as an element of $l^\infty(\mathcal{F})$ evaluated as
\[
P(f) := \int f \, \d P = \E f(X),
\]
and ask how large can $\mathcal{F}$ be in order for $P_n$ to be very close to $P$ as $n \rightarrow \infty$.

A natural measure of closeness is the size of
\begin{equation} \label{equ::pnconvstar}
\| P_n - P \|_{\mathcal{F}} := \sup_{f \in \mathcal{F}} | P_n(f) - P(f)|.
\end{equation}
As $\| P_n - P \|_{\mathcal{F}}$ may not be measurable, one can work with outer almost sure convergence and ask when
\[
P^* \left( \lim_{n \rightarrow \infty} \| P_n - P \|_{\mathcal{F}} = 0 \right) = 1,
\]
defined in terms of the \emph{outer measure} $P^* (B) = \inf \left\{ P(A) : A \supset B, A \in \mathcal{A} \right\}$ for any $A \subseteq \Omega$. If this convergence takes place, $\mathcal{F}$ has the so-called Glivenko--Cantelli property. Characterizations of how large $\mathcal{F}$ may be relative to the structure of $P$ is dealt with in the now classical expositions of \citet{dudley99} and \citet{vaart96}.

Supposing that $\mathcal{F}$ is Glivenko--Cantelli (that is, has the Glivenko--Cantelli property), it is natural to ask by which rate this convergence takes place. One way to approach this is to ask how rapidly a function $r(n) \nearrow \infty$ may grow in order to keep the size of
\[
r(n) \| P_n - P \|_{\mathcal{F}}
\]
stable in some appropriate sense. This leads us to discover that under reasonable conditions on $\mathcal{F}$, the rate $r(n) = \sqrt{n}$ gives
\[
\sqrt{n} \| P_n - P \|_{\mathcal{F}} = O_{P^*}(1).
\]
These developments are described in \citet{vaart96} and \citet{dudley99}, which gives conditions on $\mathcal{F}$ to be a so-called Donsker class -- that is, conditions for $\sqrt{n} [ P_n - P]$ to converge weakly in $l^\infty(\mathcal{F})$ to a $P$-Brownian Bridge in the Hoffman-J\o rgensen sense.

These two levels of accuracy are of fundamental importance in asymptotic statistics and are connected in non-trivial ways. The present investigation concerns one such connection. \citet{talagrand87}'s deep study of the Glivenko--Cantelli property of $\mathcal{F}$ shows \citep[in his Theorem 22, see also Theorem 6.6.A of][]{dudley99} that if $\mathcal{F}$ is Glivenko-Cantelli and made up of $P$-integrable measurable functions, then 
\begin{equation} \label{equ::tildeomega}
\tilde \Omega := \left\{ \omega \in \Omega :  \lim_{n \rightarrow \infty} \| P_n - P \|_{\mathcal{F}} (\omega) = 0 \right\}
\end{equation}
is measurable (even though $\| P_n - P \|_{\mathcal{F}}$ need not be) and $P ( \tilde \Omega ) = 1$.
This implies that on all of $\tilde \Omega$, there exists \emph{a last time} an error larger than any prescribed $\eps > 0$ is ever committed. Let
\[
N_\eps = \sup \{ n : \| P_n - P \|_{\mathcal{F}} > \eps \} %= \sup \{ n : P_n \notin B_\eps(P) \}
\]
be the last time an error larger than $\eps > 0$ is ever committed. Notice that by the definition of almost sure convergence,
\[
\{ N_\eps < \infty \text{ for each } \eps > 0 \} = \tilde \Omega.
\]
Hence, $N_\eps$ is finite with probability one even though $N_\eps$ may not be measurable.
It natural to inquire into the size $N_\eps$, and this question connects the two precision levels above in the following manner.
Define $m = \floor{y/\eps^2}$ and $y_0 = \eps^2 \floor{y/\eps^2}$ so that
\begin{equation} \label{equ::nepsclassical}
P ( \eps^2 N_\eps > y ) = P \left( \sup_{n \geq m} \| P_n - P \|_{\mathcal{F}} > \eps \right) = P \left( \sup_{s \geq 1} \sqrt{m} \| P_{\floor{ms}} - P \|_{\mathcal{F}} > \sqrt{y_0} \right). 
\end{equation}
So if $\sup_{s \geq 1} \sqrt{m} \| P_{\floor{ms}} - P \|_{\mathcal{F}}$ has a non-trivial weak limit, we can use this to find distributional approximations  of $N_\eps$. What is needed is that the partial sum process 
\begin{equation} \label{equ::partial}
\mathbb{X}_{n} := \sqrt{n} ( P_{\floor{ns}} - P )
\end{equation}
converges weakly on $l^\infty([1,\infty) \times \mathcal{F})$ to some non-trivial variable $\mathbb{X}$. This shows that
\[
\sup_{s \geq 1} \sqrt{m} \| P_{\floor{ms}} - P \|_{\mathcal{F}} = \| \mathbb{X}_n \|_{[1, \infty] \times \mathcal{F}} \nwconvstar \| \mathbb{X} \|_{[1, \infty] \times \mathcal{F}}
\]
by the continuous mapping theorem, which together with eq.~\eqref{equ::nepsclassical} shows that
\begin{equation} \label{equ::nepsgeneral}
\eps^2 N_\eps \epswconvstar \| \mathbb{X} \|_{[1, \infty] \times \mathcal{F}}^2.
\end{equation}

The class $\mathcal{F}$ is called functional Donsker if the so-called sequential empirical process $\mathbb{Z}_n(s,f) = s \mathbb{X}_{m}(s,f)$ converges weakly on $[0,1] \times \mathcal{F}$ to a mean zero Gaussian process $\mathbb{Z}$ on $(0,1] \times \mathcal{F}$ with covariance structure
\begin{equation} \label{equ::kiefer}
\Cov \left( \mathbb{Z}(s,f), \mathbb{Z}(t,g) \right) = (s \land t) \left( P fg - P f P g \right),
\end{equation}
called a Kiefer-M\"{u}ller process.
The set of functional Donsker classes and Donsker classes are in fact the same \citep[see Chapter 12.2 of][]{vaart96}, and the seemingly stronger statement of full $l^\infty([1,\infty) \times \mathcal{F})$ convergence of $\mathbb{X}_n$ to $s^\inv \mathbb{Z}_s$ actually follows when $\mathcal{F}$ is functionally Donsker \citep[Exercise 2.12.5][]{vaart96}.
Time reversal of the Kiefer-M\"{u}ller process \citep[exercise 2.12.4][]{vaart96} implies that $\mathbb{Z}(s, f) := \mathbb{X}_{1/s}(f)$ is a Kiefer-M\"{u}ller process on $(0,1] \times \mathcal{F}$. Hence,
\[
\eps^2 N_\eps \nwconvstar \| \mathbb{X} \|_{[1, \infty] \times \mathcal{F}}^2  =  \| \mathbb{Z} \|_{(0,1] \times \mathcal{F}}^2
\]
for a Kiefer-M\"{u}ller process $\mathbb{Z}$ on $l^\infty((0,1] \times \mathcal{F})$ as long as $\mathcal{F}$ is Donsker.
Thus, while the mere almost sure existence of $N_\eps$ is secured through the Glivenko--Cantelli property of $\mathcal{F}$, we get distributional approximations of $N_\eps$ from the Donsker property of $\mathcal{F}$.

The above questions are natural for any statistical estimator, and not just for the empirical distribution function. For a sequence of estimators $\{ \hat \theta_n \}_{n = 1}^\infty$ for which $\hat \theta_n \nasconvstar \theta$, we can define
\begin{equation*} 
N_\eps = \sup \{ n : \| \hat \theta_n - \theta \| > \eps \}
\end{equation*}
where $\| \cdot \|$ is an appropriate norm.
The present paper shows that the above connection between the Glivenko--Cantelli and Donsker properties of $\mathcal{F}$ is transferred from the empirical distribution function $P_n$ over $\mathcal{F}$ to all estimators $\hat \theta$ which are (in an appropriate sense) close to being so-called Hadamard-differentiable statistical functionals of $P_n$ over $\mathcal{F}$. The class of Hadamard-differentiable statistical functionals includes a fair portion of statistical estimators in use -- for example $Z$-estimators with classical regularity conditions. 

The investigation of  $N_\eps$ for various estimators has a long history in probability and statistics, and goes back at least to \citet{bahadur1967rates}. A steady stream of papers has worked with the subject, and we mention \citet{robbins1968limiting}, \citet{kao1978time}, \citet{stute83} and \citet{hjort92}. The theory contained in the present paper generalizes these investigations and puts them in a general framework.

The perhaps most obvious motivation for studying $N_\eps$ is to identify the probabilistic aspects that influence its limit distribution as $\eps \rightarrow 0^+$. We will see that for Hadamard-differentiable statistical functionals, only the Hadamard-differential and the choice of norm in defining $N_\eps$ matters, besides the factors influencing the limiting distribution of the last time an error larger than $\eps$ is committed by the empirical distribution function itself. This gives a fresh and statistically motivated interpretation of the Hadamard-differential as a measure of variance. 

We note that practically all statistical estimators can in principle be studied by only focusing on the empirical distribution. That is, for practically every possible estimator $\hat \theta_n$ taking values on some space $\mathbb{E}$, we can find a class $\mathcal{F}$ and nonrandom mapping $\phi_n : \mathbb{D}_n \subseteq l^\infty(\mathcal{F}) \mapsto \mathbb{E}$ so that
\[
\hat \theta_n = \phi_n(P_n(f))
\]
in which $\phi_n(P_n(f))$ is $\phi_n$ evaluated at the mapping $f \mapsto P_n(f)$.
Clearly, the class of all estimators written as $\phi_n(P_n(f))$ is far too vast for a unified study, and we need to impose some restrictions on $\phi_n$.
Such a study was initiated in \citet{hjort92} which identified the limit of $\eps^2 N_\eps$ when $\hat \theta_n = \bar X_n + R_n$ where $\bar X_n = P_n(\iota)$ is an $iid$ average and equal to the empirical distribution evaluated at the identity functional, and $R_n$ is small in the sense that $\sqrt{m} \sup_{n \geq m} |R_n| = o_P(1)$.
They also worked with estimators of the form $\hat \theta_n = \phi(F_n)$ defined in terms of the classical empirical distribution function $F_n$ and where $\phi$ was assumed to be so-called locally Lipschitz differentiable -- a rather strong functional differentiation concept which implies Hadamard-differentiability. Such estimators can be written as $\phi(P_n(f))$ where $f$ ranges over identity functions over $(-\infty, t)$ for $t \in \mathbb{R}$.

This paper studies maps $\phi_n = \phi$ which for a Donsker class $\mathcal{F}$ are Hadamard-differentiable and estimators $\hat \theta_n$ which are close to Hadamard-differentiable functionals in the sense that
\[
\hat \theta_n = \phi_n(P_n(f)) = \phi(P_n(f)) + R_n
\]
where again $\sqrt{m} \sup_{n \geq m} |R_n| = o_{P^*}(1)$.
We then apply these limit theorems to provide new sequential fixed width confidence intervals for such estimators, and use tail approximations for Gaussian processes to provide approximations for the sizes involved in computing such confidence sets.

Hadamard-differentiability (henceforth H-differentiability) is a quite weak differentiability concept, which means that a very large class of statistical estimators can be written as H-differentiable statistical functionals of the empirical distribution. Examples include the Nelson--Aalen and Kaplan--Meier estimators, the empirical copula process and a large class of $Z$-estimators \citep[see Section 3.9.4 of][]{vaart96}.  We say that a map $\phi : \mathbb{D}_\phi \subset \mathbb{D} \mapsto \mathbb{E}$ defined on topological vector spaces $\mathbb{D}$ and $\mathbb{E}$ is H-differentiable tangentially to a set $\mathbb{D}_0 \subseteq \mathbb{D}$ if there is a continuous linear map $\dot{\phi_\theta} : \mathbb{D}_0 \mapsto \mathbb{E}$, such that
\begin{equation} \label{equ::hdiffconv}
\lim_{n \rightarrow \infty} \frac{\phi(\theta + t_n h_n) - \phi(\theta)}{t_n} = \dot{\phi_\theta}(h)
\end{equation}
for all converging sequences $t_n \rightarrow 0$ and $h_n \rightarrow h$ such that $h \in \mathbb{D}_0$ and $\theta + t_n h_n \in \mathbb{D}_\phi$ for every $n$. Let $\Delta_h(t) = \phi(\theta + t h)$. If $\phi$ is H-differentiable at $P$, its H-differential is given by $\Delta_h'(0)$ where $\Delta'$ is the classical derivative.
As we will deal with functionals of empirical distributions, we will work exclusively with $\mathbb{D} \subseteq l^\infty(\mathcal{F})$ and $\mathcal{E} = l^\infty(\mathcal{E})$ both equipped with the supremum norm. We will suppress the dependence which $\phi$ has on $\mathcal{F}$ and the use of the uniform norm, and write $\phi(P_n)$ instead of $\phi(P_n(f))$. However, whether or not $\phi$ is Hadamard-differentiable is clearly dependent on both $\mathcal{F}$ and the use of the uniform norm. See Remark \ref{remark::uniform} for further comments on this interplay.

H-differentiability is one of many possible functional generalizations of ordinary differentiation. 
The mathematical mathematical significance of H-differentiability is that it is the weakest functional differentiability concept which respects a chain-rule \citep[Section A.5][]{bickel93}. Its statistical significance is that it is the weakest differentiability concept which allows a generally applicable functional extension of the classical delta method of asymptotic statistics, called the functional delta method \citep[see][]{vaart96}. We note that the above definition we explicitly assumes that the H-differential is linear. This assumption can be avoided at the cost of a somewhat more involved theory. As the main results of this paper valid also under such a weakening, we follow the text of \citet{vaart96} by assuming that the differential is linear as it simplifies our presentation. However, see Remark \ref{remark::nonlinear} for further discussion on the consequences of estimators with non-linear H-differential for our investigation.

As a concrete example of an H-differentiable estimator, consider the Nelson--Aalen estimator on $[0,\tau]$. Suppose that we observe $X_i = (Z_i, \Delta_i) \sim F$ where $Z_i = Y_i \land C_i$ and $\Delta_i = 1 \{ Y_i \leq C_i \}$ are defined in terms of unobservable $iid$ failure times $Y_i < \tau$ distributed according to $G$ and observable $iid$ censoring times $C_i$. Under fairly general conditions, given e.g.~in \citet{shorack86}, the Nelson--Aalen estimator $\Lambda_n(t)$ converges almost surely to its limit, and we have
\[
\Lambda_n(t) = \int_{[0,t]} \frac{1}{\bar{\mathbb{H}}_n}  \d \mathbb{H}_n^{uc} \nasconv \Lambda(t) := \int_{[0,t]} \frac{1}{1 - G(t)} \, \d G
\]
where
\[
\mathbb{H}_n^{uc}(t) = \frac{1}{n} \sumin \Delta_i 1 \{Z_i \leq t \} \qquad \text{and} \qquad \bar{\mathbb{H}}_n(t) = \frac{1}{n} \sumin 1 \{ Z_i \geq t \}.
\]
Let $F_n$ be the bivariate empirical distribution of the observations $X_i = (Z_i, C_i)$. By \citet[example 3.9.19]{vaart96}, we can write
\[
\Lambda_n(t) = \phi(F_n)
\]
for an H-differentiable functional $\phi$.
This H-differentiability structure now leads to the famous process convergence of the Nelson--Aalen estimator 
\[
\sqrt{n} \left( \Lambda_n(t) - \Lambda(t) \right) \nwconvstar \dot \phi(\mathbb{Z})(t)
\]
through a simple application of the functional delta method \citep[see][section 3.9]{vaart96}, where $\mathbb{Z}$ is a $P$-Brownian Bridge on $[0,\tau) \times \{0,1\}$. In the same manner, our paper shows that if we let
\[
N_\eps = \sup \left\{n \in \mathbb{N} : \sup_{0 \leq t \leq \tau} \left| \Lambda_n(t) - \Lambda(t) \right| \geq \eps \right\} = \sup \left\{n \in \mathbb{N} : \| \Lambda_n - \Lambda \|_{[0,\tau]} \geq \eps \right\},
\]
the H-differentiability structure implies that
\begin{equation} \label{equ::nepsna1}
\eps^2 N_\eps \nwconvstar \left( \sup_{0 \leq s \leq 1} \sup_{0 \leq t \leq \tau} | \dot{\phi} (\mathbb{Z}_s)(t) | \right)^2 = \|\dot \phi  \mathbb{Z}_s\|_{[0,1] \times [0, \tau]}^2
\end{equation}
as an immediate consequence of our main result in Section \ref{section::hdifferentiable}, where $\mathbb{Z}_s(z,c)$ is a Kiefer-M\"{u}ller process on $(0,1] \times [0,\tau) \times \{ 0,1 \}$. In this case, $\dot{\phi}(\mathbb{Z}_s))(t)$ is also a martingale in $t$ for each $s$. This allows the application of the theorem of Section \ref{subsection::martingale}, which simplifies the limit result of eq~\eqref{equ::nepsna1} to
\[
\eps^2 N_\eps \epswconv \sigma^2 \left( \sup_{0 \leq s \leq 1} \sup_{0 \leq t \leq 1} \left| \mathbb{S}(s,t) \right| \right)^2 = \sigma^2  \| \mathbb{S} \|_{[0,1]^2}^2
\]
for a Brownian Sheet $\mathbb{S}$ on $[0,1]^2$ where
\[
\sigma^2 = \int_{[0,\tau]} \frac{1 - \Delta \Lambda(z)}{P \{ Z \geq z\} } d \Lambda(z).
\]

We give an application of our limit results to sequential confidence sets in Section \ref{section::sequential}. The variable $N_\eps$ is the last passage time of an $\eps$-ball in the uniform norm, and its limiting distribution can be used to construct sequential confidence sets. The limit distribution of $\eps^2 N_\eps$ is defined in terms of a supremum of a Gaussian mean zero process, and we utilize known tail-bounds for Gaussian processes to find closed form approximations to the fixed-width confidence sets. 

This martingale structure simplifies the construction of sequential confidence sets, and Section \ref{subsection::martingale} gives very tight approximations for the sizes needed to construct such sets when the limit distribution of $\sqrt{n} [ \phi(P_n) - \phi(P) ]$ is a martingale. This results in a new and easily calculated sequential confidence set for the Nelson--Aalen estimator. Indeed, let $A^\inv$ be the inverse of (the rapidly converging) sum 
\begin{equation} \label{equ::alambda}
A(\lambda) = 1 - \sum_{k = -\infty}^\infty (-1)^k \left[  \Phi((2k+1) \lambda) - \Phi((2k-1) \lambda) \right]
\end{equation}
in which $\Phi$ is the cumulative distribution function of a standard Gaussian random variable.
We will show that for some $m \in [\sigma^2 A^\inv(\sqrt{\alpha})^2/\eps_0^2, \sigma^2 A^\inv(\sqrt{\alpha}/2)^2/\eps_0^2 + 1]$, we have that
\[
P \left( \Lambda \in \left\{ f :  \sup_{t \in [0,\tau]} \left| f(t) - \Lambda_n(t) \right| \leq \eps_0 \right\} \text{ for all } n \geq m  \right)
\]
is close to $1 - \alpha$. In particular, the choice $m = \sigma^2 A^\inv(\sqrt{\alpha}/2)^2/\eps_0^2 + 1$ works. 
%An analogous result is valid also for the Kaplan--Meier estimator. 

Section \ref{section::stochproc} deals with related a problem arising in stochastic programming. \citet{shapiro2008lectures} gives several practical applications in operations research where interest is in the value of $\min_{x \in X} g(x)$ where $g(x) = \E G(x, \xi)$ is the expected loss of a loss-function $G$ defined in terms on a random vector $\xi$ which has a known distribution. Often $g(x)$ is difficult to compute, but $G(x, \xi)$ is simpler to compute, while $\xi$ is possible to simulate. This motivates approximating $\min g(x)$ by $\min \hat g(x)$ where $ \hat g_n(x) = \frac{1}{n} \sum_{i=1}^n G(x, \xi_i)$ in which $\xi_1, \xi_2, \ldots, \xi_n$ are $iid$ realizations of $\xi$. A natural question is how to choose $n$. Our general theory provides a well-motivated answer in a large class of cases, and we work out the details for a risk averse stochastic problem using a so-called absolute semideviation risk measure.

We conclude the paper with surveying other statistically relevant results connected or implied by our main result in Theorem \ref{theorem::main}. We propose two new measures of asymptotic relative efficiency and also prove convergence of variables related to $N_\eps$. These variables are the number of errors larger than $\eps$, the ratio of errors of sizes contained in $[a \eps, b \eps]$ relative to all errors larger than $\eps$ and the mean size of errors larger than $\eps$. The two last variables have not been studied in the literature previously.
\section{Limit Theorems} \label{section::hdifferentiable}
We will work under the following set of assumptions.
\begin{enumerate}
\item \label{assumption::general} (\emph{Probability structure and spaces}) Assume given a sequence of $iid$ observations $\{X_n\}_{n=1}^\infty$ living on a metric space space $\mathcal{X}$ and distributed according to $P$. Suppose that $\mathcal{F}$ is made up of real-valued measurable square-integrable functions from $\mathcal{X}$ to $\mathbb{R}$. 
\item \label{assumption::donsker} (\emph{Donsker structure}) Assume that $\mathcal{F}$ is Donsker (and hence Glivenko--Cantelli) with respect to $P$, and is bounded with respect to $P$ in the sense that $\sup_x \sup_{f \in \mathcal{F}} | f(x) - P f | < \infty$.
\item \label{assumption::hdiff} (\emph{Differentiability structure}) Assume that $\phi : \mathbb{D}_\phi \subseteq \mathbb{D} = l^\infty(\mathcal{F}) \mapsto l^\infty(\mathcal{E}) =: \mathbb{E}$
is H-differentiable at $P$ tangentially to $\mathbb{D}_0 \subseteq \mathbb{D}$. Denote the H-differential at $P$ by $\dot \phi$.
\end{enumerate}
Assumptions \ref{assumption::general} and \ref{assumption::donsker} are the basic assumptions of \citet{vaart96}, while assumption \ref{assumption::hdiff} is the weakest form of H-differentiability used in the literature and assumes only differentiability at the single point $P$ tangentially to $\mathbb{D}_0 \subseteq \mathbb{D}$. 

H-differentiability at $P$ implies that $\phi$ is continuous at $P$ \citep[Proposition A.5.1,][]{bickel93}, and secures that $\phi(P_n)$ converges outer almost surely to $\phi(P)$. In fact, the measurability of $\tilde \Omega$ of eq.~\eqref{equ::tildeomega} shows that $\phi(P_n)$ even converge almost surely to $\phi(P)$ and that
\begin{equation} \label{equ::nepsomega}
\tilde \Omega = \{ P_n \rightarrow P \} =\{ \phi(P_n) \rightarrow \phi(P) \} = \{ N_\eps < \infty \text{ for each } \eps > 0\}
\end{equation}
where
\[
N_\eps = \sup \{ n : \| \phi(P_n) - \phi(P) \|_{\mathcal{E}} > \eps \}.
\]
Hence, $N_\eps < \infty$ with probability one, even though neither $N_\eps$ nor $\phi(P_n)$ needs to be measurable.

Most of the work in deriving the limit behaviour of $N_\eps$ is done in the following lemma. 
It states that weak convergence of the partial sum process
\[
(s, f) \mapsto \sqrt{n} \left[ P_{\floor{sn}}- P \right](f)
\]
in $l^\infty([1,\infty) \times \mathcal{F})$ implies weak convergence of the partial ``sum'' (or ``partial functional'') process
\[
(s,e) \mapsto \sqrt{n} \left[ \phi( P_{\floor{sn}} ) - \phi(P) \right](e) \nwconvstar \dot \phi(s^\inv \mathbb{Z}_{s}).
\]
in $l^\infty([1,\infty) \times \mathcal{F})$ if $\phi$ is H-differentiable. In a certain sense, the lemma is a generalized version of the functional delta method. However, we will make use of the measurability of 
\[
\{ \phi(P_n) \rightarrow \phi(P) \}
\]
which is difficult to prove for other types of estimators. And so if such measurability conditions are in place also for other weakly converging sequences having a separable and Borel-measurable limit variable, the transference of weak convergence from partial sums to ``partial functionals'' is valid. However, we state the Lemma specifically for $\phi(P_n)$ for concreteness.
\begin{lemma} \label{lemma::converge}
Under assumptions 1-3, we have that
\[
\sqrt{n} \left[ \phi( P_{\floor{sn}} )(e) - \phi(P)(e) \right] \nwconvstar \dot \phi(s^\inv \mathbb{Z}_{s})
\]
on $l^\infty([1,\infty) \times \mathcal{F})$ where $\mathbb{Z}$ is a Kiefer-M\"{u}ller process on $[1,\infty) \times \mathcal{F}$ and $\dot \phi (s^\inv \mathbb{Z}_{s})$ is short-hand for $\dot \phi$ evaluated at the $l^\infty(\mathcal{F})$-map $f \mapsto s^\inv \mathbb{Z}_{s}(f)$. The limit $\dot \phi(s^\inv \mathbb{Z}_{s})$ is a Gaussian process on $l^\infty([1,\infty) \times \mathcal{E})$.
\end{lemma}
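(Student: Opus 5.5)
The plan is to recast the statement as an instance of the functional delta method, but for a process indexed by $(s,e)$ rather than by $e$ alone. I would use the extended continuous mapping theorem of \citet[Theorem 1.11.1]{vaart96}, applied to the maps
\[
g_n(h)(s,e) = \sqrt{n}\bigl[\phi(P + n^{-1/2} h(s,\cdot)) - \phi(P)\bigr](e), \qquad g(h)(s,e) = \dot\phi(h(s,\cdot))(e),
\]
acting on $h \in l^\infty([1,\infty)\times\mathcal{F})$. With $h = \mathbb{X}_n$ from \eqref{equ::partial} we have $P + n^{-1/2}\mathbb{X}_n(s,\cdot) = P_{\floor{sn}}$. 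Hence the process in the statement equals $g_n(\mathbb{X}_n)$. The introduction already gives $\mathbb{X}_n \nwconvstar \mathbb{X} = s^{-1}\mathbb{Z}_s$ in $l^\infty([1,\infty)\times\mathcal{F})$ for Donsker $\mathcal{F}$, via Exercise 2.12.5 of \citet{vaart96}. The limit is tight, and it is Borel measurable with separable range. Its sample paths satisfy $\mathbb{X}(s,\cdot)\in\mathbb{D}_0$ for every $s$; this holds provided $\mathbb{D}_0$ contains the support of the $P$-Brownian bridge, which is the standard tangent requirement that I take to be implicit in assumption \ref{assumption::hdiff}.

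The key step is to verify the hypothesis of the extended continuous mapping theorem. Suppose $h_n \to h$ uniformly on $[1,\infty)\times\mathcal{F}$, with $h$ in the support of the limit and $P + n^{-1/2}h_n(s,\cdot) \in \mathbb{D}_\phi$. We then need
\[
\sup_{s\ge 1}\bigl\| g_n(h_n)(s,\cdot) - \dot\phi(h(s,\cdot))\bigr\|_{\mathcal{E}} \to 0 .
\]
Pointwise in $s$ this is exactly H-differentiability, with $t_n = n^{-1/2}$ and $h_n(s,\cdot)\to h(s,\cdot)$. The supremum over $s$ is the main obstacle. I would handle it with the standard uniformity of Hadamard differentiability over compacts: for a compact $K\subset\mathbb{D}_0$, the difference quotient converges to $\dot\phi$ uniformly over $h\in K$ and over perturbations within $\delta$ of $K$. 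One proves this by contradiction, extracting a subsequence $s_n$ with $h_n(s_n,\cdot)\to k \in K$ and applying \eqref{equ::hdiffconv} along it.

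It therefore suffices to show that $\{h(s,\cdot): s\in[1,\infty]\}$ is relatively compact in $l^\infty(\mathcal{F})$. For paths of $s^{-1}\mathbb{Z}_s$ I would argue as follows. By time reversal, $\mathbb{X}(s,\cdot) = \mathbb{Z}'(1/s,\cdot)$ with $\mathbb{Z}'$ a Kiefer--M\"uller process on $(0,1]\times\mathcal{F}$. This process has continuous paths in $u\in[0,1]$ under the uniform $\mathcal{F}$-norm, with value $0$ at $u=0$ (i.e. $s=\infty$), almost surely. The image of a compact interval under a continuous map is compact, so the support of the limit can be restricted to such paths. Here the boundedness in assumption \ref{assumption::donsker} makes the tightness behave well near $s=\infty$.

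Two pieces remain. First, the limit $g(\mathbb{X})$ is Gaussian on $l^\infty([1,\infty)\times\mathcal{E})$, because $\dot\phi$ is continuous and linear: finite-dimensional marginals are limits of linear images of the Gaussian $\mathbb{X}$, and the map $h\mapsto \dot\phi(h(s,\cdot))$ is continuous, so the image remains tight. Second, I would check that the random elements $\mathbb{X}_n$ land in the domain of $g_n$ eventually. This is where the measurability of $\tilde\Omega$ from \eqref{equ::nepsomega} enters: on $\tilde\Omega$, $P_{\floor{sn}}$ lies in a neighbourhood of $P$ uniformly in $s\ge1$ for large $n$.
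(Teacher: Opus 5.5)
Your proposal is correct, and its skeleton matches the paper's. Both write the process as $g_n(\mathbb{X}_n)$ with $g_n(h)(s,e)=\sqrt{n}\,[\phi(P+n^{-1/2}h_s)-\phi(P)](e)$, and both invoke the extended continuous mapping theorem with limit map $h\mapsto\dot\phi(h_s)(e)$.

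The two routes differ at the step that carries the weight, namely uniformity in $s$. The paper builds $h_{n,s}\to 0$ as $s\to\infty$ into the domain of $g_n$, which is where it uses $\tilde\Omega$. It then asserts that $\sup_{s}$ is attained at some $s(e)$ and treats $h_{n,s(e)}\to h_{s(e)}$ as a single convergent sequence to which H-differentiability applies. That does not quite work. The maximiser depends on $n$ as well as on $e$, so there is no fixed limit direction. Moreover, H-differentiability along one sequence does not control $\sup_{e}$ over an $e$-dependent family of directions, and that is the very uniformity being proved. Attainment of the supremum is also not justified for step-function paths.

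Your argument closes this gap. The image $\{h_s : s\in[1,\infty]\}$ of a limit path is compact in $\mathbb{D}_0$, because the time-reversed Kiefer--M\"uller process is continuous in $u\in[0,1]$ with value $0$ at $u=0$. Combined with the subsequence/contradiction proof that Hadamard differentiability is uniform on compacts, this gives the required uniformity. It also needs the vanishing at $s=\infty$ only for the limit path, not for the approximating $h_n$. The paper's version avoids any appeal to continuity of the limit in $s$, but that is precisely the ingredient its argument is missing.

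One small correction to your last paragraph. In your route, $P+n^{-1/2}\mathbb{X}_n(s,\cdot)=P_{\floor{sn}}\in\mathbb{D}_\phi$ is automatic. What $\tilde\Omega$ is really needed for is that $g_n(\mathbb{X}_n)$ is an element of $l^\infty([1,\infty)\times\mathcal{E})$ at all. That means $\sup_{k\ge n}\|\phi(P_k)-\phi(P)\|_{\mathcal{E}}<\infty$ for every $n$, not just for large $n$. This holds on $\tilde\Omega$ by continuity of $\phi$ at $P$, and the null complement is then discarded via $P^*(B\cap\tilde\Omega)=P^*(B)$, as in the paper.

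Your explicit hypothesis that $\mathbb{D}_0$ contains the support of the $P$-Brownian bridge is genuinely required. It gives $0$ and every $\mathbb{X}(s,\cdot)$ in $\mathbb{D}_0$, by linearity and closedness of a Gaussian support plus path continuity. The paper uses the same hypothesis tacitly when it applies the extended continuous mapping theorem with a limit taking values in $\mathbb{P}_0$.
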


\begin{proof}
Recall that we assume that
\[
\phi : \mathbb{D}_\phi \subseteq \mathbb{D} = l^\infty(\mathcal{F}) \mapsto l^\infty(\mathcal{E}) = \mathbb{E}
\]
is H-differentiable at $P$ tangentially to $\mathbb{D}_0 \subseteq \mathbb{D}_\phi$.  That is, there exists is a continuous linear map $\dot{\phi_\theta} : \mathbb{D}_0 \mapsto \mathbb{E}$, such that
\[
\lim_{n \rightarrow \infty} \left\| \frac{\phi(\theta + t_n h_n) - \phi(\theta)}{t_n} - \dot{\phi_\theta}(h) \right\|_{\mathcal{E}} = 0
\]
for all converging sequences $t_n \rightarrow 0$ and $h_n \rightarrow h$ such that $h \in \mathbb{D}_0$ and $\theta + t_n h_n \in \mathbb{D}_\phi$ for every $n$. 
Define $h_s : \mathbb{D} \mapsto \mathbb{E}$ as the restriction map $h_s(f) = h(s_0,f)\bigl|_{s_0 = s}$ for $h \in l^\infty([1,\infty) \times \mathcal{F})$ and let
\begin{align*}
\mathbb{P}_\phi &= \left\{ h \in l^\infty([1,\infty) \times \mathcal{F}) : \text{for all } s \geq 1, \, h_s \in \mathbb{D}_\phi \right\},\\
\mathbb{P}_0 &= \left\{ h \in l^\infty([1,\infty) \times \mathcal{F}) : \text{for all } s \geq 1, \, h_s \in \mathbb{D}_0, \lim_{s \rightarrow \infty} h_s = 0 \right\},\\
\mathbb{P}_n &= \left\{ h \in l^\infty([1,\infty) \times \mathcal{F}) : \text{for all } s \geq 1, \, h_s \in \mathbb{D}_n, \lim_{s \rightarrow \infty} h_s = 0 \right\}
\end{align*}
where 
\[
\mathbb{D}_n = \left\{ h \in l^\infty(\mathbb{F}) : P + \frac{1}{\sqrt{n}} \, h \in \mathbb{D}_\phi \right\}.
\]
Define
\[
\Phi : \mathbb{P}_\phi \mapsto l^\infty([1,\infty) \times \mathcal{E}),  \dot \Phi_P : \mathbb{P}_0 \mapsto l^\infty([1,\infty) \times \mathcal{E})
\]
by
\[
\Phi(h)(s,e) = \phi(h_{s})(e),  \qquad \dot \Phi_P(h)(s,e) = \dot \phi(h_{s})(e),
\]
Define $g_n : \mathbb{P}_m \mapsto l^\infty([1,\infty) \times \mathcal{E})$ and $c_n : \mathbb{P}_m \mapsto l^\infty(\mathcal{E})$ by
\[
g_n(h) = \sqrt{n} \left[ \Phi \left( P + \frac{1}{\sqrt{n}} h \right) - \Phi(P) \right], \qquad c_n(h) = \sqrt{n} \left[ \phi \left( P + \frac{1}{\sqrt{n}} h \right) - \phi(P) \right].
\]
Although we know that H-differentiability of $\phi$ implies the validity of the extended continuous mapping theorem \citep[Theorem 1.11.1][]{vaart96} on $c_n$ for the spaces $\mathbb{D}_n$ and $\mathbb{D}_0$, we wish to use the mapping theorem on $g_n$ with the spaces $\mathbb{P}_n$ and $\mathbb{P}_0$. To do this, we suppose that $h_n \rightarrow h$ with $h_n \in \mathbb{P}_n$ and $h \in \mathbb{P}_0$ and must show that also $g_n (h_n) \rightarrow \dot \Phi(h)$.
As $P + \frac{1}{\sqrt{n}} \, h_{n,s} \in \mathbb{D}_\phi$ for each $s$, H-differentiability of $\phi$ at $P$ tangentially to $\mathbb{D}_0$ implies that
\[
\sup_{e \in \mathcal{E}} |g_n (h_n)(s,e) - \dot \phi(h)(e)| \rightarrow 0
\]
for each $s$, which is seemingly weaker than the required
\[
\sup_{s \in [1, \infty), e \in \mathcal{E}} |g_n (h_n)(s,e) - \dot \phi(h)(e)| = \sup_{e \in \mathcal{E}} \sup_{s \in [1, \infty)} |g_n (h_n)(s,e) - \dot \Phi(h)(s,e)| \rightarrow 0.
\]
However, the inner supremum must be achieved by an $s \in [1,\infty)$. Indeed, as $h_{n,s}$ is vanishing when $s \rightarrow \infty$, we have that
\[
\lim_{s \rightarrow \infty} g_n (h_n)(s,e) = g_n(0) = \sqrt{n} \left[ \Phi ( P ) - \Phi(P) \right] = 0
\]
by the continuity of $\phi$ at $P$ and
\[
\lim_{s \rightarrow \infty} \dot \Phi(h)(s,e) = \dot \Phi (0) = 0
\]
by the linearity of $\dot \phi$.
Let $s(e)$ be the attained maximum of $\sup_{s \in [1, \infty)} |g_n (h_n)(s,e) - \dot \Phi(h)(s,e)|$ and pick, say, the smallest one if the point of maximum is not unique. We have that
\begin{align*}
\sup_{e \in \mathcal{E}} \sup_{s \in [1,\infty)} |g_n (h_n)(s,e) - \dot \Phi(h)(s,e)| &= \sup_{e \in \mathcal{E}}  |g_n (h_n)(s,e) - \dot \Phi(h)(s,e)| \\
&= \sup_{e \in \mathcal{E}}  |c_n (h_{s(e),n})(e) - \dot \Phi(h_{s(e)})(e)|.
\end{align*}
However, as $h_{n,s} \in \mathbb{D}_n$ and $h_{s} \in \mathbb{D}_0$ for any $s \geq 1$, we have that $\tilde h_n = h_{s(e),n}$ is just a sequence in $\mathbb{D}_n$ converging to $\tilde h = h_{s(e)}$, an element of $\mathbb{D}_0$.
Indeed, let $e \in \mathcal{E}$ be given. Then
\[
\| h_{s(e),n} - h_{s(e)} \|_{\mathcal{F}} \leq \sup_{s \geq 1} \| h_{n,s} - h_s \|_{\mathcal{F}} = \| h_n - h \|_{[1,\infty) \times \mathcal{F}} \rightarrow 0
\]
where the convergence follows as we know that $h_n \rightarrow h$ in $l^\infty([1,\infty), \mathcal{F})$. We can conclude with $g_n (h_n) \rightarrow \dot \phi(h)$, proving the validity of the extended continuous mapping theorem.

As $\mathbb{X}_n = \sqrt{n} [ P_{\floor{sn}} - P] $ converges weakly to a separable limit on $l^\infty([1,\infty) \times \mathcal{F})$, we are left with showing that $\mathbb{X}_n$ is concentrated on $\mathbb{P}_n$. There are two defining properties of $\mathbb{P}_n$. The first is trivially fulfilled by $\mathbb{X}_n$ for each $n$. Notice that if $\phi$ is to be used as a statistical functional, clearly 
\[
P_n = P + \frac{1}{\sqrt{n}} \sqrt{n} [ P_n - P ] \in \mathbb{D}_\phi,
\]
and hence
\[
\sqrt{n} [ P_n - P ] \in \mathbb{D}_n = \left\{ q \in l^\infty(\mathcal{F} ) : P + \frac{1}{\sqrt{n}} \, q \in \mathbb{D}_\phi \right\}.
\]
for each $n$.  As
\[
P + \frac{1}{\sqrt{n}} \mathbb{X}_n = P + \frac{1}{\sqrt{n}} \, \sqrt{n} [ P_{\floor{sn}} - P] = P_{\floor{sn}},
\]
this means that also
\[
P + \frac{1}{\sqrt{n}} \, \mathbb{X}_n(s,f) \in \mathbb{D}_n
\]
for every $s \geq 1$.

However, the second defining property is only fulfilled with probability one. 
Indeed, \citet{talagrand87} \citep[see also Theorem 6.6.A of][]{dudley99} shows that as $\mathcal{F}$ is Glivenko--Cantelli and made up of measurable and integrable functions, we have that
\[
P \left( \lim_{n \rightarrow \infty} \| P_n - P \|_\mathcal{F} = 0 \right)= 1,
\]
even though $\| P_n - P \|_\mathcal{F}$ might not itself be measurable. 
As
\[
\{ \lim_{s \rightarrow \infty} \mathbb{X}_n(s,e) = 0 \} = \{ \lim_{n \rightarrow \infty} \| P_n - P \|_\mathcal{F} = 0 \} =: \tilde \Omega,
\]
the process $\mathbb{X}_n$ is included in $\mathbb{P}_n$ with probability one, which suffices to allow the application of the extended continuous mapping theorem, as the exclusion of a \emph{measurable} set with probability zero does not change the (outer) probability structure of the problem. This is seen as follows. Given a $B \subseteq \Omega$, we have that
\[
P^*(B \cap \tilde \Omega) = P \left( \left( B \cap \tilde \Omega \right)^* \right) = P (B^* \cap \tilde \Omega) = P(B^*) = P^*(B),
\]
where the second equality comes from the measurability of $\tilde \Omega^C$ and exercise 1.2.15 in \citet{vaart96}.
Hence, we may conclude with 
\[
\sqrt{m} \left[ \phi( P_{\floor{sn}} ) - \phi(P) \right] = g_n(t,\mathbb{X}_n) \nwconvstar \dot \Phi_P ( \mathbb{X}_s) = \dot \phi(s^\inv \mathbb{Z}_s)
\]
on $[1,\infty) \times \mathcal{\mathcal{E}}$ for a Kiefer-M\"{u}ller process $\mathbb{Z}$ on $[1,\infty) \times \mathcal{F}$ from the extended continuous mapping theorem. Finally, the Gaussianity of the limit process follows either from the functional definition of Gaussian processes in Banach spaces or Lemma 3.9.8 of \citet{vaart96}.
\end{proof}

\begin{theorem} \label{theorem::main}
Let $\mathbb{Z}_s(f) = \mathbb{Z}(s,f)$ be a Kiefer-M\"{u}ller process indexed by $[0,1) \times \mathcal{F}$ and $\dot \phi  \mathbb{Z}_s$ is $\dot \phi$ evaluated at the map $f \mapsto \mathbb{Z}_s(f)$.
Given assumptions 1-3, the following is true.
\begin{enumerate}  
\item For $N_\eps = \sup \{ n : \| \phi(P_n) - \phi(P) \|_{\mathcal{F}} \}$, we have that
\begin{equation} \label{equ::nepsmain}
\eps^2 N_\eps \nwconvstar \| \dot \phi  \mathbb{Z}_s \|_{(0,1] \times \mathcal{E}}^2.
\end{equation}
\item Given an estimator $\hat \theta_n \nasconvstar \theta$, let $N_\eps = \sup \{ n : \| \hat \theta_n - \theta \|_{\mathcal{E}} > \eps \}$.
Assume $\hat \theta_n$ is close to being H-differentiable in the sense that $\hat \theta_n = \phi(P_n) + R_n$ where $\sqrt{m} \sup_{n \geq m} \| R_n \|_\mathcal{E}$ is $o_{P^*}(1)$. We then have
\begin{equation} \label{equ::rnnepsconv} 
\eps^2 N_\eps \nwconvstar \|\dot \phi  \mathbb{Z}_s \|_{(0,1] \times \mathcal{E}}^2.
\end{equation}
\end{enumerate}
In both cases, $\dot \phi  \mathbb{Z}_s$ is a zero mean Gaussian process. If $\mathbb{D}_0$ is a linear space, then $\dot \phi  \mathbb{Z}_s$ has a covariance function with the product structure
\begin{equation} \label{equ::covarX}
\rho \left( (s_1,e_1), (s_2,e_2) \right) := \E \dot \phi  \mathbb{Z}_{s_1}(e_1) \dot \phi  \mathbb{Z}_{s_2}(e_2) = (s_1 \land s_2) \E \dot \phi W^\circ(e_1) \dot \phi W^\circ(e_2).
\end{equation}
where $W^\circ$ is a $P$-Brownian bridge process on $\mathcal{F}$. 
\end{theorem}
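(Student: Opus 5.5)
Part (1) reduces to Lemma \ref{lemma::converge} exactly as in the introduction's treatment of $P_n$. Fix $y>0$ and set $m=\floor{y/\eps^2}$, $y_0=\eps^2 m$, so that $y_0\to y$ as $\eps\to0^+$. By the definition of $N_\eps$,
\[
P^*(\eps^2 N_\eps > y) = P^*\Bigl( \sup_{n \geq m} \| \phi(P_n)-\phi(P)\|_{\mathcal{E}} > \eps \Bigr) = P^*\Bigl( \sup_{s\geq 1} \sqrt{m}\, \| \phi(P_{\floor{ms}})-\phi(P)\|_{\mathcal{E}} > \sqrt{y_0}\Bigr).
\]
The process inside is $\sqrt{m}[\phi(P_{\floor{ms}})-\phi(P)]$, viewed in $l^\infty([1,\infty)\times\mathcal{E})$. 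By Lemma \ref{lemma::converge} it converges weakly to $\dot\phi(s^\inv\mathbb{Z}_s)$ as $m\to\infty$. The sup-norm is continuous, so the continuous mapping theorem gives
\[
\sup_{s\ge1}\sqrt m\|\phi(P_{\floor{ms}})-\phi(P)\|_{\mathcal E}\nwconvstar \|\dot\phi(s^\inv\mathbb Z_s)\|_{[1,\infty)\times\mathcal E}.
\]
Next I would use the time reversal property of the Kiefer--M\"uller process (Exercise 2.12.4 of \citet{vaart96}): $u\mapsto u\,\mathbb{Z}_{1/u}$ is again a Kiefer--M\"uller process on $(0,1]\times\mathcal{F}$. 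The map $\dot\phi$ is linear and continuous, so the substitution $u=1/s$ shows
\[
\|\dot\phi(s^\inv\mathbb{Z}_s)\|_{[1,\infty)\times\mathcal{E}} \overset{d}{=} \|\dot\phi\mathbb{Z}_u\|_{(0,1]\times\mathcal{E}}.
\]
To pass from convergence of the sup to convergence of $P^*(\eps^2N_\eps>y)$ at continuity points, I would use the portmanteau theorem together with $y_0\to y$. A small sandwich argument between $y-\delta$ and $y+\delta$ handles the moving threshold. This yields \eqref{equ::nepsmain}.

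For part (2), let $N'_\eps$ be the last exceedance time for $\hat\theta_n$. I would show that
\[
\sup_{s\ge1}\sqrt m\|\hat\theta_{\floor{ms}}-\theta\|_{\mathcal E} \quad\text{and}\quad \sup_{s\ge1}\sqrt m\|\phi(P_{\floor{ms}})-\phi(P)\|_{\mathcal E}
\]
differ by at most $\sqrt m\sup_{n\ge m}\|R_n\|_{\mathcal E}=o_{P^*}(1)$. Here I use $\theta=\phi(P)$, which holds because $R_n\to0$ and $\phi(P_n)\to\phi(P)$ almost surely. Slutsky's lemma in its outer-probability form (Lemma 1.10.2 of \citet{vaart96}) then gives the same weak limit. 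The same distribution-function argument as in part (1) transfers this limit to $\eps^2N'_\eps$.

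For the covariance structure, the first step is to identify $\mathbb{Z}_s$ with $\sqrt{s}\,W^\circ_s$ in distribution, where $W^\circ_s$ is a $P$-Brownian bridge for each fixed $s$. For $s_1\le s_2$, I would decompose $\mathbb{Z}_{s_2}=\mathbb{Z}_{s_1}+(\mathbb{Z}_{s_2}-\mathbb{Z}_{s_1})$. The increment is independent of $\mathbb{Z}_{s_1}$ and has mean zero, since the Kiefer--M\"uller process has independent increments in $s$.

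The linearity of $\dot\phi$ is needed on $\mathbb{D}_0$. This is where the hypothesis that $\mathbb{D}_0$ is a linear space enters: it ensures that the increment lies in the domain of $\dot\phi$, so that $\dot\phi\mathbb{Z}_{s_2}=\dot\phi\mathbb{Z}_{s_1}+\dot\phi(\mathbb{Z}_{s_2}-\mathbb{Z}_{s_1})$. Independence then kills the cross term, and $\dot\phi(\sqrt{s_1}W^\circ)=\sqrt{s_1}\dot\phi W^\circ$ gives the factor $s_1=s_1\land s_2$ in \eqref{equ::covarX}. Gaussianity with mean zero is already supplied by Lemma \ref{lemma::converge}.

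The main obstacle I expect is the measurability bookkeeping. $N_\eps$ need not be measurable, so every step must be phrased with $P^*$, and the threshold $\sqrt{y_0}$ depends on $\eps$. The plan is to rely on the measurability of $\tilde\Omega$ from \eqref{equ::nepsomega} and on continuity of the limit distribution function. Continuity holds except possibly at $0$, because the supremum of a nondegenerate Gaussian process has a continuous distribution off its left endpoint.
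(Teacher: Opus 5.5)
Your proposal is correct. For parts (1) and (2) it follows the paper's argument: the reduction via \eqref{equ::nepsclassical} to the supremum of the partial-functional process, Lemma~\ref{lemma::converge} with the continuous mapping theorem, time reversal of the Kiefer--M\"uller process, and then Lemma 1.10.2 of \citet{vaart96}, with the difference bounded by $\sqrt{m}\sup_{n\ge m}\|R_n\|_{\mathcal{E}}$ via the triangle inequality. You are more explicit than the paper about two points: the moving threshold $\sqrt{y_0}$, and the identification $\theta=\phi(P)$. Your remark on continuity of the distribution of a Gaussian supremum is not needed, since convergence of distribution functions at continuity points of the limit already suffices for your sandwich argument.

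The genuine difference is in how you obtain the covariance formula \eqref{equ::covarX}.

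\emph{The paper's route.} It builds the discrete approximation $\mathbb{Z}_n(s,f)=n^{-1/2}\sum_{i\le \floor{ns}}W_i^\circ(f)$ from independent $P$-Brownian bridges. It proves tightness of $\mathbb{Z}_n$ with L\'evy's and Borell's inequalities, so that $\mathbb{Z}_n\nwconvstar\mathbb{Z}$, and pushes this through $\dot\phi$ by continuous mapping. It then computes
\[
\rho_n=\frac{\floor{ns_1}\land\floor{ns_2}}{n}\,\Cov\bigl(\dot\phi W^\circ(e_1),\dot\phi W^\circ(e_2)\bigr)
\]
from linearity and independence of the summands, and finally lets $n\to\infty$. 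That last step tacitly uses uniform integrability of Gaussian second moments.

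\emph{Your route.} You do the same independence-plus-linearity computation directly on the limit object. The covariance $(s\land t)(Pfg-PfPg)$ makes $\mathbb{Z}_{s_1}$ and $\mathbb{Z}_{s_2}-\mathbb{Z}_{s_1}$ uncorrelated and jointly Gaussian, hence independent as tight random elements of $l^\infty(\mathcal{F})$. Together with $\mathbb{Z}_{s_1}\overset{d}{=}\sqrt{s_1}\,W^\circ$, this gives the formula. You thereby avoid both the tightness proof and the interchange of limit and covariance. For this statement the paper's detour buys nothing beyond re-deriving the Kiefer--M\"uller process as a limit of Gaussian partial sums.

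Both routes use the linearity of $\mathbb{D}_0$ in the same way, to apply $\dot\phi$ to increments or sums. Both also tacitly require $\mathbb{Z}_s\in\mathbb{D}_0$ almost surely for each $s$, which is the standing assumption implicit in Lemma~\ref{lemma::converge}.
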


\begin{proof}
For the first part, we note that in light of eq.~\eqref{equ::nepsclassical}, it suffices to identify the weak limit of $\sup_{n \geq m} \sqrt{m} \| \phi(P_n) - \phi(P) \|_{\mathcal{E}}$. Thanks to the Lemma, this is easy, as
\begin{multline*}
\qquad \sup_{n \geq m} \sqrt{m} \| \phi(P_n) - \phi(P) \|_{\mathcal{E}} = \sup_{s \geq 1} \| \phi(P_{\floor{sn}}) - \phi(P) \|_{\mathcal{E}} = \sqrt{m} [ \Phi (\mathbb{X}_m ) - \phi (P) ] \|_{\mathcal{E}} \\
= \| \sqrt{m} [ \Phi ( \mathbb{X}_m ) - \phi ( P ) ] \|_{[1,\infty) \times \mathcal{E}} \nwconvstar \| \dot \phi s^\inv \tilde{\mathbb{Z}}_s \|_{[1,\infty) \times \mathcal{E}}
\end{multline*}
by the continuous mapping theorem. Finally, we know that $\mathbb{Z}_s(f) = s^\inv \tilde{\mathbb{Z}}_{1/s}(f)$ is a Kiefer-M\"{u}ller process on $(0,1] \times \mathcal{F}$. This proves the first claim, and we can readily extend this case to the second claim. Note that
\[
P^* ( \eps^2 N_\eps > y ) = P^* \left( \sup_{s \geq 1} \sqrt{m} \| \hat \theta_{\floor{ms}} - \theta \|_{\mathcal{E}} > \sqrt{y_0} \right).
\]
Thanks to Lemma 1.10.2 (i) of \citet{vaart96}, the stated convergence follows if
\[
\left| \sup_{s \geq 1} \sqrt{m} \| \hat \theta_{\floor{ms}} - \theta \| - \sup_{s \geq 1} \sqrt{m} \| \phi(P_{\floor{ms}}) - \theta \|_{\mathcal{E}} \right| \mpconvstar 0.
\]
However, $ \sup_{s \geq 1} \| \cdot \|_\mathcal{E} = \| \cdot \|_{[1, \infty) \times \mathcal{E}}$ respects the triangle inequality, so that the above difference is bounded by $\sqrt{m} \sup_{n \geq m} \| R_n \|_\mathcal{E}$ which converge to zero in probability by assumption.

We are left with proving that $\dot \phi \mathbb{Z}$ has the stated covariance structure of eq.~\eqref{equ::covarX}. Construct a sequence $W_1^\circ, W_2^\circ, \ldots$ of independent $P$-Brownian Bridges, and define
\[
\mathbb{Z}_{n}(s,f) := \frac{1}{\sqrt{n}} \sum_{i=1}^{\floor{ns}} W_i^\circ(f)
\]
which is a Gaussian mean zero process with covariance function given by
\[
 \Cov \left[ \mathbb{Z}_n(s_1,f_1), \mathbb{Z}_n(s_2,f_2)) \right]  = \frac{\floor{ns_1} \land \floor{ns_2}}{n} \Cov \left[ \mathbb{Z}_n(1,f_1), \mathbb{Z}_n(1,f_2) \right].
\]
This covariance function converges to the covariance function of a Kiefer-M\"{u}ller process on $(0,1] \times \mathcal{F}$, so that the finite dimensional distributions of $\mathbb{Z}_n$ converge weakly to those of $\mathbb{Z}$.
We now prove that $\mathbb{Z}_n$ is tight so that $\mathbb{Z}_n \nwconvstar \mathbb{Z}$.
Let $\varrho_P(f) = \left( P(f - P f)^2 \right)^{1/2}$ be the variance seminorm. Following the proof of Theorem 2.12.1 of \citet{vaart96}, we need to show that for any $\eps, \eta > 0$, there exists a $\delta > 0$ so that
\[
\limsup_{n \rightarrow \infty} P^* \left( \sup_{|s-t| + \varrho(f,g) < \delta} |\mathbb{Z}_n(s,f) - \mathbb{Z}(t,g)| > \eps \right) < \eta.
\]
By the triangle inequality, the supremum in the above display is bounded by
\begin{equation} \label{equ::twoterms}
\sup_{|s-t|<\delta} \| \mathbb{Z}_n(s,f) - \mathbb{Z}_n(t,f)\|_\mathcal{F} + \sup_{0 \leq t \leq 1} \|\mathbb{Z}_n(t,f)\|_{\mathcal{F}_\delta}
\end{equation}
where $\mathcal{F}_\delta = \{ f-g: f,g \in \mathcal{F}, \varrho(f-g) < \delta \}$. We can hence bound the probability of each of these terms being larger than $\eps$ separately.
By the generalized L\'{e}vy inequality \citep[see e.g.][Theorem 1.1.5]{de99}, we have that
\begin{align*}
P \left( \sup_{0 \leq t \leq 1} \|\mathbb{Z}_n(t,f)\|_{\mathcal{F}_\delta} > \eps \right) &= P \left( \max_{k \leq n} \| \frac{1}{\sqrt{n}} \sum_{i=1}^{k} W_i^\circ(f) \|_{\mathcal{F}_\delta} > \eps \right) \\
& \leq 9 P \left(  \| \mathbb{Z}_n(1,f) \|_{\mathcal{F}_\delta} > \eps/30 \right).
\end{align*}
An inspection of the covariance of $\mathbb{Z}_n(1,f)$ reveals that it is a $P$-Brownian Bridge for each $n$. As $\mathcal{F}$ is Donsker, a $P$-Brownian Bridge is continuous with respect to $\varrho_P$, so that $\| \mathbb{Z}_n(1,f) \|_{\mathcal{F}_\delta}$ converges to zero in probability as $\delta \rightarrow 0^+$.
To bound the probability that the first term of eq.~\eqref{equ::twoterms} is larger than $\eps$, the arguments contained in the proof of Theorem 2.12.1 in \citet{vaart96} imply that
\begin{align*}
P \left( \sup_{|s-t|<\delta} \| \mathbb{Z}_n(s,f) - \mathbb{Z}_n(t,f)\|_\mathcal{F} > \eps \right) \leq&  \left\lceil \frac{1}{\delta} \right\rceil P \left( \max_{k \leq n \delta} \| \frac{1}{\sqrt{ n}} \sum_{i=1}^{k} W_i^\circ(f) \|_{\mathcal{F}} > \eps \right) \\
=& \left\lceil \frac{1}{\delta} \right\rceil P \left( \max_{k \leq n \delta} \| \frac{1}{\sqrt{\delta n}} \sum_{i=1}^{k} W_i^\circ(f) \|_{\mathcal{F}} > \frac{\eps}{\delta} \right).
\end{align*}
Note again that $\mathbb{Z}_{n \delta}$ is a $P$-Brownian Bridge $W^\circ$ for each $n$. By the generalized L\'{e}vy inequality, the above display is bounded by
\[
9 \left\lceil \frac{1}{\delta} \right\rceil P \left(  \| \mathbb{Z}_{n \delta} (1,f) \|_{\mathcal{F}} > \frac{\eps}{30 \delta} \right) = 9 \left\lceil \frac{1}{\delta} \right\rceil P \left(  \|W^\circ \|_{\mathcal{F}} > \frac{\eps}{30 \delta} \right).
\]
the finite second moment of  $\|W^\circ\|_\mathcal{F}$ \citep[Lemma 2.3.9]{vaart96} enables us to envoke the Borell inequality \citep[Proposition A.2.1]{vaart96} which imples that $\|W^\circ\|_\mathcal{F}$ has exponentially decreasing tails. Hence, the above display converges to zero. 
We assumed that $\mathbb{D}_0$ is a linear space, so that we can apply $\dot \phi$ to $\mathbb{Z}_{n}$, which converges weakly to $\dot \phi \mathbb{Z}$ by the continuous mapping theorem. The linearity of $\dot \phi$ also shows that
\[
\dot \phi \mathbb{Z}_{n}(s,e) = \frac{1}{\sqrt{n}} \sum_{i=1}^{\floor{ns}} \dot \phi W_i^\circ (e),
\]
which has covariance function
 \begin{multline*}
\rho_n \left( (s_1,e_1), (s_2,e_2) \right) = \Cov \left[ \dot \phi ( \mathbb{Z}_n(s_1,f))(e_1), \dot \phi ( \mathbb{Z}_n(s_2,f))(e_2) \right] \\
 = \frac{\floor{ns_1} \land \floor{ns_2}}{n} \Cov \left[ \dot \phi ( \mathbb{Z}_n(1,f))(e_1), \dot \phi ( \mathbb{Z}_n(1,f))(e_2) \right].
 \end{multline*}
As $\dot \phi \mathbb{Z}_n$ is Gaussian and converges weakly to $\dot \phi \mathbb{Z}$ and as $\dot \phi \mathbb{Z}_1 = \dot \phi W^\circ$ for a $P$-Brownian Bridge $W^\circ$, we have that $\rho_n \rightarrow \rho$, where $\rho$ is defined in eq~\eqref{equ::covarX}.
\end{proof}
Several remarks are in order.
\begin{remark} {\rm \label{remark::singleton}
When $\phi(P_n)$ is a random variable, so that $\mathcal{E} = \{ e \}$ is a singleton, the covariance structure of eq.~\eqref{equ::covarX} shows that $\dot \phi \mathbb{Z}_s = \sqrt{ \var \text{IF}_\phi(X) } \mathbb{B}_s$ for a Brownian Motion $\mathbb{B}_s$ and where $\text{IF}_\phi$ is the influence function of $\phi$. Thus Theorem \ref{theorem::main} is a proper generalization of the basic result in \citet{hjort92}.
}\end{remark}

\begin{remark} {\rm \label{remark::nonlinear}
We note that the proofs of Lemma \ref{lemma::converge} and the first two parts of Theorem \ref{theorem::main} does not use the assumed linearity of $\dot \phi$, and is still true when the definition of H-differentiability is weakened to only assume eq.~\eqref{equ::hdiffconv}. The chain-rule still applies, and several new maps can be shown to be H-differentiable in this weaker sense. See \citet{romisch05} for a survey of such results. Our proof also applies in the case of set-valued functionals when an appropriate metric for comparing sets is assumed, such as the Attouch-Wets topology.
} \end{remark}

\begin{remark} {\rm \label{remark::sigma2}
The limit of $\eps^2 N_\eps$ depends only on three things. Firstly, the Kiefer-M\"{u}ller process is a mean zero Gaussian process, with covariance structure defined through $P$. Secondly, both $N_\eps$ and the limit variable is defined in terms of the uniform topology on $\mathcal{E}$. Thirdly, while $N_\eps$ is defined in terms of the full $\phi$, the limit only depends on the much simpler $\dot \phi$. This is interesting from a statistical perspective and motivates the definition of 
\begin{eqnarray} \label{equ::sigma2}
  \sigma^2 := \frac{\text{Median} \|\dot \phi  \mathbb{Z}_s \|_{(0,1] \times \mathcal{E}}^2}{\text{Median} \|\mathbb{Z}_s \|_{(0,1] \times \mathcal{F}}^2}
\end{eqnarray}
as a measure of variance for $\phi(P_n)$. There are two main reasons for scaling the median of the limit variable of $\eps^2 N_\eps$ with $\text{Median} \|\mathbb{Z}_s \|_{(0,1] \times \mathcal{F}}^2$. Firstly, all stochasticity of $\theta_n = \phi (P_n)$ originates from $P_n$, making it natural to separate the variability of $P_n$ and the variability inherent in the structure of $\phi$ itself.
Secondly, notice that if $\hat \theta = \bar X_n$ is the empirical mean of $iid$ random variables $X_1, X_2, \ldots, X_n$, then $\dot \phi \mathbb{Z}_s = \sigma B_s$ for a Brownian Motion process $B_s$. Hence,
\[
\text{Median} \|\dot \phi  \mathbb{Z}_s \|^2 = \sigma^2 \, \text{Median} \sup_{0 \leq s \leq 1} |B_s|^2.
\]
so that the $\sigma^2$ of eq.~\eqref{equ::sigma2} coincides with the standard definition of variance.
} \end{remark}

\begin{remark} \label{remark::uniform} {\rm
The structure of the class of H-differentiable functionals depends on the topology of both $\mathbb{D}$ and $\mathbb{E}$. For a collection $\mathcal{C} \subseteq \mathbb{D}$ we call $\phi$ a $\mathcal{C}$-differentiable functional at $\theta$ if
\[
\lim_{t \rightarrow 0} \sup_{h \in \mathcal{C}, \, \theta + t h \in \mathbb{D}_\phi} \left\| \frac{\phi(\theta + t h )}{t} - \dot \phi_\theta(h) \right\|  = 0.
\]
H-differentiability is equivalent to $\mathcal{C}$-differentiability when $\mathcal{C}$ is the class of all compact sets. If other topologies on $\mathbb{D}$ or $\mathbb{E}$ are used, this changes the class of H-differentiable functionals in non-trivial ways. We note that the investigation of \citet{dudley92} works with Fr\' echet differentiability functionals with $p$-variation norms on the $\mathbb{D}$-space. Fr\' echet differentiability is $\mathcal{C}$-differentiability when $\mathcal{C}$ is the class of all bounded sets of $\mathbb{D}$, which is strictly stronger than H-differentiability -- when the same topology is used. However, the classes of H-differentiable and Fr\' echet differentiable functionals are incommensurable when different topologies are used. See Section 5.2 of \citet{shao03} for examples of this incommensurability, and exercise 5.27 of \citet{shao03} for a class of functionals of the classical empirical distribution which are Fr\' echet differentiable with respect to the $L_1$-norm, but not H-differentiable with respect to the uniform norm. We have followed \citet{vaart96} in working with the uniform topology on both $\mathbb{D}$ and $\mathbb{E}$.
 }
\end{remark}

\begin{remark} {\rm
When working with estimators of the form $\hat \theta_n = \phi(P_n) + R_n$, we can no longer guarantee the measurability of $\{ N_\eps < \infty \text{ for each } \eps > 0 \}$ as eq.~\eqref{equ::nepsomega} need not hold. If $R_n \not\equiv 0$ but $R_n \nasconvstar 0$, this only provides a the existence of a version of the measurable cover of $\|\hat \theta_n - \phi(P)\|$, which we denote by $\|\hat \theta_n - \phi(P)\|^\star$, that converges to zero almost surely. Although the convergence of eq.~\eqref{equ::rnnepsconv} is valid without measurability, we can only guarantee the measurability of $\{ N_\eps^\star < \infty \}$ for $\eps > 0$ where $N_\eps^\star := \sup \{ n : \| \hat \theta_n - \theta \|_{\mathcal{E}}^\star > \eps\}$. 
}
\end{remark}

\section{Sequential confidence sets} \label{section::sequential}
As in \citet{hjort92} and \citet{stute83}, our results about the limiting distribution of $\eps^2 N_\eps$ can be used to construct sequential fixed-volume confidence regions.
As our limit result encompasses all H-differentiable functionals, this leads to new confidence sets for many estimators, the Nelson--Aalen estimator being one of them.
In this connection we remark that \citet{bandyopadhyay03} find fixed-value confidence intervals for the H-differentiable functional
\begin{equation} \label{equ::indian}
\phi(F_{X,Y}) = \int F_X \, \d F_Y = P(X \leq Y).
\end{equation}
The basis for their construction of a fix-volume confidence set for $P(X \leq Y)$ is a direct application of a special case of Theorem \ref{theorem::main}.

The connection between the limit of $N_\eps$ and the construction of fixed-width confidence sets is as follows. Calculate or approximate the upper $\alpha$ quantile of the limit variable of the theorem and denote this quantile by $\lambda_{\alpha}$. Fix the radius of the confidence set as $\eps_0$ and compute $m = \floor{\lambda_{\alpha}/\eps_0^2}$.
By the distributional convergence, we get that
\begin{align} 
P (\eps^2 N_\eps < \lambda_\alpha) &= P ( \| \phi(P_n) - \phi(P) \|_\mathcal{E} \leq \eps_0 \text{ for all } n \geq m ) \notag{} \\
&= P (  \phi(P) \in B \left( \eps_0, \phi(P_n) \right)  \text{ for all } n \geq m ) \label{equ::fixedwidth}
\end{align}
is close to $1- \alpha$ where
\[
B(\eps, y) = \{x :  \| x - y \|_\mathcal{E} \leq \eps \}
\]
is an $\eps$-ball in $l^\infty(\mathcal{E})$.
This has intuitive appeal. Whereas confidence sets are usually of the form
\[
P ( \phi(P) \in C_n ) \geq 1-\alpha, \qquad{} \text{for all } n \geq m
\]
and thus only give a probability statement for one $n \geq m$ at the time, a fixed-volume confidence set gives a simultaneous answer for all $n \geq m$. This is intuitively pleasing, and \citet{hjort92} humorously mentioned that even Serfling's physician \citep[page 49]{serfling80} is interested in sequential fixed-volume confidence regions. 

The difficult step in constructing the fixed width confidence set of eq.~\eqref{equ::fixedwidth} is to calculate $\lambda_\alpha$. In some special cases, as in the case of eq.~\eqref{equ::indian}, the limit distribution of $\eps^2 N_\eps$ can be found in a closed form expression. This seems out of reach for a completely general H-differentiable $\phi$. However, in some cases we can find useful approximations for tail-probabilities of $\| \dot \phi  \mathbb{Z}_s \|_{(0,1] \times \mathcal{E}}^2$. Although this quantile can in theory be simulated directly from the Donsker Theorem, this is often very time consuming, if even possible. 

When the limit variable $\dot \phi \mathbb{Z}_s$ is Gaussian, we have the well-developed theory of Gaussian tail bounds at our disposal. 
%  See e.g.~\citet{adler90}.
% The most general tail bound is the Borell inequality as given in 
% \begin{equation} \label{equ::borell1}
% P (\|\dot \phi \mathbb{Z}_s\|^2_{(0,1] \times \mathcal{E}} \geq \lambda ) = P (\|\dot \phi \mathbb{Z}_s\|_{(0,1] \times \mathcal{E}} \geq \sqrt{\lambda} ) < 
% \end{equation}
% for all $\lambda > 0$. 
Under typical conditions, $\dot \phi \mathbb{Z}_s$ has zero mean -- see Section 3.9.2 of \citet{vaart96}. In this case we can use Proposition A.2.1 of \citet{vaart96} that gives the Borell inequality in the form
\begin{equation} \label{equ::borell2}
P (\|\dot \phi \mathbb{Z}_s\|^2_{(0,1] \times \mathcal{E}} \geq \lambda ) = P (\|\dot \phi \mathbb{Z}_s\|_{(0,1] \times \mathcal{E}} \geq \sqrt{\lambda} ) < 2 \exp \left( - \frac{\lambda}{8 \E \|\dot \phi \mathbb{Z}_s\|^2_{(0,1] \times \mathcal{E}} }\right)
\end{equation}
for all $\lambda > 0$. 
The following Lemma shows that the above inequalities are non-trivial under our assumptions.

\begin{lemma}
Let $\mathbb{Z}_s(f) = \mathbb{Z}(s,f)$ be a Kiefer-M\"{u}ller process indexed by $[0,1) \times \mathcal{F}$ and $\dot \phi  \mathbb{Z}_s$ is $\dot \phi$ evaluated at the map $f \mapsto \mathbb{Z}_s(f)$. Given assumptions 1-3, $\|\dot \phi \mathbb{Z}_s\|_{(0,1] \times \mathcal{E}}$ has finite second moment.
\end{lemma}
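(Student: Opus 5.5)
The plan is to reduce the claim to a tail statement for the supremum of a Gaussian process, and to use that the supremum is almost surely finite. Throughout, $\dot\phi\mathbb{Z}_s$ is understood as the tight Borel measurable limit produced in Lemma \ref{lemma::converge} and Theorem \ref{theorem::main}.

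First, we confirm that $\dot\phi\mathbb{Z}_s$ is a tight, zero-mean Gaussian element of $l^\infty((0,1]\times\mathcal{E})$. By Lemma \ref{lemma::converge}, $\sqrt{n}[\phi(P_{\floor{sn}})-\phi(P)]$ converges weakly to $\dot\phi(s^\inv\mathbb{Z}_s)$ on $[1,\infty)\times\mathcal{E}$. Its limit arises from the extended continuous mapping theorem applied to a separable Borel measurable limit. Hence it is tight and concentrates on a separable subset. Gaussianity was already noted at the end of the proof of Lemma \ref{lemma::converge}, through Lemma 3.9.8 of \citet{vaart96}. The time reversal $s\mapsto 1/s$, together with the factor $s$ used in the proof of Theorem \ref{theorem::main}, is a continuous linear transformation. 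It therefore preserves both Gaussianity and tightness. So $X:=\dot\phi\mathbb{Z}_s$ is a tight Gaussian map into $l^\infty((0,1]\times\mathcal{E})$, and in particular $\|X\|_{(0,1]\times\mathcal{E}}<\infty$ almost surely.

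Second, we invoke the moment result for tight Gaussian processes, Lemma 2.3.9 of \citet{vaart96}; this is the same fact the paper already used for $\|W^\circ\|_\mathcal{F}$. It states that a tight Gaussian process in $l^\infty(T)$ has a supremum norm with finite moments of all orders. Equivalently, we can apply Borell's inequality (Proposition A.2.1 of \citet{vaart96}) with the almost sure finiteness of $\|X\|$. Borell gives
\[
P\bigl(\bigl|\|X\| - \text{Median}\|X\|\bigr| > t\bigr)\le \exp\bigl(-t^2/(2\sigma_X^2)\bigr), \qquad \sigma_X^2=\sup_{(s,e)}\var X(s,e).
\]
It requires only that the median is finite and that $\sigma_X^2<\infty$. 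Integrating $2t$ against this sub-Gaussian tail then yields $\E\|X\|^2<\infty$.

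The step I expect to be the main obstacle is verifying $\sigma_X^2<\infty$, or more generally the hypotheses needed for the Gaussian concentration argument, because $\mathcal{E}$ is arbitrary. Tightness handles this. Each coordinate satisfies $|X(s,e)|\le\|X\|$, which is almost surely finite. For a centered Gaussian variable, $P(|X(s,e)|>M)$ can be kept small uniformly in $(s,e)$ only if its variance is uniformly bounded. Otherwise $P(\|X\|>M)$ could not tend to $0$ as $M\to\infty$. Alternatively, the covariance structure \eqref{equ::covarX} gives $\var X(s,e)=s\,\var\dot\phi W^\circ(e)\le\sup_e\var\dot\phi W^\circ(e)$. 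This is finite because $\dot\phi W^\circ$ is itself a tight Gaussian element of $l^\infty(\mathcal{E})$: it is the image of the tight $P$-Brownian bridge $W^\circ$ (tight since $\mathcal{F}$ is Donsker) under the continuous linear map $\dot\phi$. With these checks in place, the second moment bound follows directly.
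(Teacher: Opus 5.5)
Your proof is correct, but it takes a different route from the paper. The paper never applies a Gaussian moment bound to the sequential process directly. Instead it proceeds in three steps:
\begin{enumerate}
\item It uses Proposition \ref{proposition::inequality}, a L\'evy maximal inequality over dyadic times applied to the independent symmetric increments of $s\mapsto\dot\phi\mathbb{Z}_s$, to get $P(\|\dot\phi\mathbb{Z}_s\|^2_{(0,1]\times\mathcal{E}}>x)\le 2P(\|\dot\phi\mathbb{Z}\|^2_{\mathcal{E}}>x)$.
\item It integrates this to obtain $\E\|\dot\phi\mathbb{Z}_s\|^2_{(0,1]\times\mathcal{E}}\le 2\E\|\dot\phi W^\circ\|^2_{\mathcal{E}}$.
\item Only then does it invoke the Gaussian moment fact, and only for the fixed-time limit $\dot\phi W^\circ$ of the ordinary functional delta method.
\end{enumerate}

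You instead apply Borell's concentration inequality about the median directly to the whole process on $(0,1]\times\mathcal{E}$. You first obtain tightness and separability from Lemma \ref{lemma::converge} and the time reversal. You also close the one genuine gap in that approach, $\sup_{(s,e)}\var X(s,e)<\infty$, correctly: unbounded coordinate variances would force $P(\|X\|>M)=1$ for every $M$, contradicting almost sure finiteness.

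Your route is more self-contained. It bypasses the reflection argument, including its implicit need for path regularity in $s$ to pass from dyadic grids to all of $(0,1]$ by monotone convergence. It needs only that the limit is a tight centred Gaussian element, which Lemma \ref{lemma::converge} and Theorem \ref{theorem::main} already supply.

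The paper's route buys a quantitative comparison: the sequential second moment is at most twice the Kolmogorov--Smirnov-type quantity $\E\|\dot\phi W^\circ\|^2_{\mathcal{E}}$. That comparison lets the Borell bound \eqref{equ::borell2} in Section \ref{section::sequential} be expressed through the simpler fixed-time process. Your argument gives finiteness but no such bound.
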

\begin{proof}
By Proposition \ref{proposition::inequality} below, we have
\[
\E \|\dot \phi \mathbb{Z}_s\|_{(0,1] \times \mathcal{E}}^2 = \int_0^\infty P ( \|\dot \phi \mathbb{Z}_s\|_{(0,1] \times \mathcal{E}}^2 > x ) \, \d x \leq 2 \int_0^\infty P ( \|\dot \phi \mathbb{Z}_s\|_{\mathcal{E}}^2 > x ) \, \d x = 2 \E \|\dot \phi \mathbb{Z}\|_{\mathcal{E}}^2
\]
As $\dot \phi \mathbb{Z}$ is the weak limit of $\sqrt{n} [ \phi(P_n) - \phi(P) ]$ as $n \rightarrow \infty$, Lemma 2.3.9 of \citet{vaart96} shows that $\E \|\dot \phi \mathbb{Z}\|_{\mathcal{E}}^2$ is finite.
\end{proof}

The expectation of inequality \ref{equ::borell2} is simpler to approximate than the full distribution of $\| \dot \phi \mathbb{Z}_s\|^2_{(0,1] \times \mathcal{E}}$ and provides a general bound for $\lambda_\alpha$. However, $\E \|\dot \phi \mathbb{Z}\|_{\mathcal{E}}^2$ is often difficult to compute and the constants involved can be improved in special cases. The following subsections gives explicit bounds for some classes of special cases.

\begin{remark} {\rm \label{remark::anonymous}
The confidence sets presented in this section rely on the approximation $P(\eps^2 N_\eps < \lambda_\alpha ) \approx 1 - \alpha$ through Theorem \ref{theorem::main}. An alternative construction of approximate sequential confidence sets for a fixed $\eps > 0$ can be based on the following observation. 
Let
\[
(s, e) \mapsto R_{ms}(e) = \left[ \phi(P_{\floor{ms}})(e) - \phi(P)(e) \right] - \left[ \dot \phi(P_{\floor{ms}} - P) \right]
\]
and suppose a bound of the type
\begin{equation} \label{equ::rbound}
P \left( \sup_{s \geq 1, e \in \mathcal{E}} |R_{ms}(e)| > y \right) \leq r(y)
\end{equation}
is known. Following the notation of Section \ref{section::intro}, the triangle inequality shows that
\begin{equation} \label{equ::nonasymptotic}
P(\eps^2 N_\eps > y) \leq P \left( \sqrt{m} \sup_{s \geq 1, e \in \mathcal{E}} | \dot \phi(P_{\floor{ms}} - P)(e)  | > \sqrt{y_0}/2 \right) + r \left( \sqrt{y_0}/2 \right).
\end{equation}
By the linearity of $\dot \phi$, the first term is the supremum of a sequential empirical process, for which non-asymptotic bounds exist. The inequality of \citet{talagrand1996new} applies to sequential empirical processes as well, as it is proved through estimating the Laplace transform, and the exponentiated partial sum is a submartingale, so that Doob's inequality can be applied. However, although good constants for the Talagrand inequality are given in \citet{massart2000constants} for the non-sequential empirical process, we are unaware of analogous results for the sequential case. Supposing such constants known, one could bound any quantile from eq.~\eqref{equ::nonasymptotic}. However, it may be difficult to find useful $r$-functions for eq.~\eqref{equ::rbound}. Analogously to the unspecified precision underlying $P(\eps^2 N_\eps < \lambda_\alpha ) \approx 1 - \alpha$, one could also give conditions securing $\sup_{s \geq 1, e \in \mathcal{E}} |R_{ms}(e)| = o_p(1)$ and ignore the second term of eq.~\eqref{equ::nonasymptotic} when solving for $y$ in eq.~\eqref{equ::nonasymptotic}.
} \end{remark}

\subsection{A reduction to the Kolmogorov--Smirnov limit}
The weak limit of $\eps^2 N_\eps$ is almost the limit of the Kolmogorov--Smirnov Goodness-of-fit functional for the estimator $\phi(P_n)$. Approximating such goodness-of-fit limits is a well-known problem and have been studied in many settings. The following result relates the  $\eps^2 N_\eps$ limit to that of the Kolmogorov--Smirnov functional.

\begin{proposition} \label{proposition::inequality}
Let $\mathbb{Z}_s(f) = \mathbb{Z}(s,f)$ be a Kiefer-M\"{u}ller process indexed by $[0,1) \times \mathcal{F}$ and $\dot \phi  \mathbb{Z}_s$ is $\dot \phi$ evaluated at the map $f \mapsto \mathbb{Z}_s(f)$. Given assumptions 1-3, we have
\[
P( \| \dot \phi \mathbb{Z}_s \|_{(0,1] \times \mathcal{E}} > \lambda ) \leq 2 P( \| \dot \phi \mathbb{Z} \|_{\mathcal{E}} > \lambda ).
\]
where $\mathbb{Z}$ is an $\mathcal{F}$-Brownian Bridge.
\end{proposition}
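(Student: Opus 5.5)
The plan is a Lévy/Ottaviani-type maximal inequality applied to the process $s \mapsto \dot\phi \mathbb{Z}_s$, which inherits independent symmetric increments from the Kiefer--M\"{u}ller process. By the linearity of $\dot \phi$ and the product covariance structure of eq.~\eqref{equ::covarX}, the process $s \mapsto \dot\phi\mathbb{Z}_s$ is a Gaussian process in $l^\infty(\mathcal{E})$. Its increments $\dot\phi\mathbb{Z}_{t}-\dot\phi\mathbb{Z}_{s}$, $s<t$, are independent across disjoint intervals and symmetric. Moreover, $\dot\phi\mathbb{Z}_1 \stackrel{d}{=} \dot\phi \mathbb{Z}$, with $\mathbb{Z}$ the $P$-Brownian bridge, since $\mathbb{Z}_1$ is a $P$-Brownian bridge. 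Thus $\dot\phi\mathbb{Z}_s$ behaves like a Brownian motion with values in $l^\infty(\mathcal{E})$, and the claimed bound is the Banach-space version of the reflection principle $P(\sup_{s\le1}|B_s|>\lambda)\le 2P(|B_1|>\lambda)$.

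\textbf{Step 1: discretize in time.} To avoid working directly with a continuum of $s$, I use the construction from the proof of Theorem~\ref{theorem::main}. Let $W_1^\circ,W_2^\circ,\dots$ be independent $P$-Brownian bridges and set $S_k=\sum_{i\le k}\dot\phi W_i^\circ$. Then $(n^{-1/2}S_k)_{k\le n}$ has the same law as $(\dot\phi\mathbb{Z}_{k/n})_{k\le n}$. The summands $\dot\phi W_i^\circ$ are independent symmetric $l^\infty(\mathcal{E})$-valued variables, and they are separable and Borel measurable because they are tight Gaussian limits.

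\textbf{Step 2: apply L\'evy's inequality.} For independent symmetric summands in a separable Banach space (here, the closed separable support of the limit inside $l^\infty(\mathcal{E})$), L\'evy's inequality gives
\[
P\Bigl(\max_{k\le n}\|S_k\|_{\mathcal{E}}>\lambda\sqrt n\Bigr)\le 2P\bigl(\|S_n\|_{\mathcal{E}}>\lambda\sqrt n\bigr)=2P\bigl(\|\dot\phi\mathbb{Z}\|_{\mathcal{E}}>\lambda\bigr).
\]
Only the sharp constant $2$ of L\'evy's inequality gives the stated bound; the version with constant $9$ and $\eps/30$ quoted earlier in the paper would not suffice.

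\textbf{Step 3: pass to the continuum.} Take $n=2^j$ and let $j\to\infty$. The events $\{\max_{k\le 2^j}\|\dot\phi\mathbb{Z}_{k/2^j}\|_{\mathcal{E}}>\lambda\}$ increase to $\{\sup_{s\in D}\|\dot\phi\mathbb{Z}_s\|_{\mathcal{E}}>\lambda\}$, where $D$ is the set of dyadic rationals in $(0,1]$. By Theorem~\ref{theorem::main}, the sample paths of the limit can be taken uniformly continuous in $s$ for the sup norm over $\mathcal{E}$. Hence the supremum over $D$ equals the supremum over $(0,1]$, and monotone convergence of probabilities gives the claim.

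I expect the main obstacle to be the measurability and separability bookkeeping. L\'evy's inequality needs the summands to be Radon (separably supported) random elements, and the final step needs path continuity of $\dot\phi\mathbb{Z}_s$ in $s$ to replace the dyadic supremum by the full one. I would handle both by invoking the tightness of the limit in Lemma~\ref{lemma::converge} and Theorem~\ref{theorem::main}, which supplies a version with continuous paths and separable range.
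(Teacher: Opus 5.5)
Your proposal is correct and follows essentially the same route as the paper. Both arguments discretize $s$ on the dyadic grid $\{j/2^k\}$, apply L\'evy's inequality with constant $2$ to the independent symmetric increments $\dot\phi\mathbb{Z}_{j/2^k}-\dot\phi\mathbb{Z}_{(j-1)/2^k}$, and let $k\to\infty$ by monotone convergence; the only difference is that you realize these increments through i.i.d.\ Brownian bridges rather than directly. Your use of path continuity of $s\mapsto\dot\phi\mathbb{Z}_s$ in $l^\infty(\mathcal{E})$ to pass from the dyadic supremum to the supremum over $(0,1]$ makes explicit a step the paper leaves implicit.
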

\begin{proof}
Fix an integer $k > 0$ and let $m = 2^k$. For $k = 1, 2, \ldots, m$ and $t \in [0,1]^d$, let 
\[
U_k(e) = \dot \phi \mathbb{Z}_{j/m}(e) - \dot \phi \mathbb{Z}_{(j-1)/m}(e)
\]
which is a symmetric stochastic process, and where $U_1, U_2, \ldots, U_k$ are independent of each other. As $\dot \phi \mathbb{Z}_{j/m}(e) = \sum_{i=1}^j U_i(e)$, the general L\'evy's inequality given e.g. in Proposition A.1.2 in \citet{vaart96}, shows that
\[
P \left( \sup_{1 \leq j \leq m}  \| \dot \phi \mathbb{Z}_{j/m}\|_{\mathcal{E}} > \lambda \right) = P \left( \sup_{1 \leq j \leq m} \left\| \sum_{i=1}^j U_i \right\|_{\mathcal{E}} > \lambda \right) 
\leq 2 P \left(  \left\| \sum_{i=1}^m U_i \right\|_{\mathcal{E}}  > \lambda \right),
\]
which equals $2 P( \| \dot \phi \mathbb{Z}_1 \|_{\mathcal{E}} > \lambda )$.
As $\mathbb{Z}_1$ is an $\mathcal{F}$-Brownian Bridge, the claimed upper bound follows from monotone convergence as $k \rightarrow \infty$.
\end{proof}

The above result leads e.g.~to explicit bounds for the limit distribution of $\eps^2 N_\eps$ for the two-dimensional empirical distribution function through the results of \citet{adler86}. Let $\mathbb{W}$ be a two-dimensional real valued $F$-Brownian-Bridge on $\mathbb{R}^2$ and $\mathbb{K}$ an $F$-Kiefer-process on $(0,1] \times \mathbb{R}^2$. The above lemma, symmetry of zero mean Gaussian processes and Theorem 3.1 of \citet{adler86} shows that for any $F$, we have
\begin{multline*}
P \left( \sup_{(s,t) \in (0,1] \times \mathbb{R}^2} |\mathbb{Z}_s(t)| > \sqrt{\lambda} \right) \leq 2 P \left( \sup_{t \in \mathbb{R}^2} |\mathbb{W}(t)| > \sqrt{\lambda} \right) \\
\leq 4 P \left( \sup_{t \in \mathbb{R}^2} \mathbb{W}(t) > \sqrt{\lambda} \right) \leq 4 \sum_{k = 1}^\infty (8 k^2 \lambda - 2) e^{-2 k^2 \lambda} .
\end{multline*}

\subsection{Gaussian Local Martingales} \label{subsection::martingale}
If $\dot \phi W^\circ$ is a univariate local martingale indexed by $[0,\tau)$ the limit variable of $N_\eps$ has a particularly simple structure.

\begin{theorem} \label{theorem::martingales}
Assume that $\mathbb{D}_0$ is linear, that $\mathcal{E}$ is $[0, \tau)$ for some $0 < \tau < \infty$, and that for each $s$, the process $\dot \phi(\mathbb{Z}_s)(t)$ is a square integrable continuous local martingale in $t$ starting at zero. Let $\left< \dot \phi W^\circ, \dot \phi W^\circ \right>_s$ be the covariation process of $\dot \phi W^\circ$ and define $\sigma^2(t) = \inf \left\{ s : \left< \dot \phi W^\circ, \dot \phi W^\circ \right>_s > t \right\}$.
Then the limit variable of Theorem \ref{theorem::main} has the same distribution as $\sigma^2 \|\mathbb{S} \|_{[0,1]^2}^2$ where $\mathbb{S}$ is a Brownian Sheet on $[0,1]^2$ and $\sigma^2 = \sigma^2(\tau)$ is non-stochastic.
\end{theorem}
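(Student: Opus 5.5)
The plan is to compute the covariance of the Gaussian process $\mathbb{Y}(s,t) := \dot\phi\mathbb{Z}_s(t)$ on $(0,1]\times[0,\tau)$ and recognise it as a time-changed Brownian sheet. By Theorem \ref{theorem::main}, since $\mathbb{D}_0$ is linear, $\mathbb{Y}$ is a zero-mean Gaussian process with covariance
\[
\E\, \mathbb{Y}(s_1,t_1)\mathbb{Y}(s_2,t_2) = (s_1\land s_2)\, \E\, \dot\phi W^\circ(t_1)\,\dot\phi W^\circ(t_2).
\]
So the entire dependence on $t$ comes from the covariance of the single Gaussian process $M(t):=\dot\phi W^\circ(t)$.

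\emph{Step 1: the covariance of $M$.} The hypothesis with $s=1$ says that $M$ is a continuous square-integrable Gaussian local martingale starting at zero. For a continuous Gaussian martingale the quadratic variation is deterministic, and $\langle M,M\rangle_t = \E M(t)^2 =: V(t)$. The independent-increment property then gives $\E M(t_1)M(t_2) = V(t_1\land t_2)$. I would justify determinism as follows: increments of a Gaussian martingale are uncorrelated with the past, and joint Gaussianity upgrades this to independence, so $M$ has independent increments. Consequently $M(t)^2 - V(t)$ is a martingale, and continuity together with uniqueness of the compensator gives $\langle M,M\rangle = V$. This step is the main technical obstacle. One has to check that the local martingale is a true martingale (square integrability is assumed), that the filtration can be taken to be the natural one, and that $V$ is nondecreasing and continuous with a finite limit $V(\tau^-)$. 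I expect the intended $\sigma^2$ to be this total variance $V(\tau)$, and the infimum formulation in the statement to be just the inverse time change.

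\emph{Step 2: time change.} From Step 1,
\[
\E\, \mathbb{Y}(s_1,t_1)\mathbb{Y}(s_2,t_2) = (s_1\land s_2)\,\big(V(t_1)\land V(t_2)\big),
\]
since $V$ is nondecreasing. Writing $u = V(t)/\sigma^2\in[0,1]$, this equals $\sigma^2 (s_1\land s_2)(u_1\land u_2)$, which is $\sigma^2$ times the Brownian sheet covariance. Hence $\mathbb{Y}(s,t) \stackrel{d}{=} \sigma\, \mathbb{S}(s, V(t)/\sigma^2)$ as processes, since Gaussian processes are determined by their means and covariances. The identity holds at the level of finite-dimensional distributions, and extends to the laws on $l^\infty$ because both sides are separable (continuous) versions.

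\emph{Step 3: the supremum.} Because $t\mapsto V(t)/\sigma^2$ is continuous, nondecreasing and maps $[0,\tau)$ onto $[0,1)$, continuity of $\mathbb{S}$ gives
\[
\sup_{s,t}|\mathbb{Y}(s,t)| \stackrel{d}{=} \sigma \sup_{(s,u)\in[0,1]^2}|\mathbb{S}(s,u)|.
\]
The endpoints $s=0$ and $u=1$ do not change the supremum, by continuity and because $\mathbb{S}$ vanishes on the axes. Squaring this identity gives the claim $\|\dot\phi\mathbb{Z}_s\|^2_{(0,1]\times\mathcal{E}} \stackrel{d}{=} \sigma^2\|\mathbb{S}\|^2_{[0,1]^2}$. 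If $\sigma^2=0$, the process vanishes and the statement holds trivially.
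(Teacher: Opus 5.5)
Your proposal is correct and follows essentially the paper's route. Like the paper, you identify the covariance of $\dot\phi W^\circ$ as $V(t_1)\land V(t_2)$, with $V$ its deterministic quadratic variation, combine this with the product structure of eq.~\eqref{equ::covarX} to recognise a time-changed Brownian sheet, and then use continuity of the time change together with Brownian-sheet scaling to reach $\sigma^2\|\mathbb{S}\|^2_{[0,1]^2}$. The differences are minor: you get the covariance from orthogonality of martingale increments rather than through Dambis--Dubins--Schwarz, you fold the scaling into $u=V(t)/\sigma^2$, and your reading of $\sigma^2(t)$ as the quadratic variation itself (rather than its inverse, as literally written) is exactly what the paper's proof uses when it writes $W(\sigma^2(t))=\dot\phi W^\circ(t)$.
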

\begin{proof}
The Dambis Dubuins-Schwarz Theorem \citep[Theorem V.1.6]{revuz99} shows that there exists a version $W$ of Brownian Motion so that $W(\sigma^2(t)) = \dot \phi W^\circ(t)$.
As $\dot \phi W^\circ$ is a continuous mean zero Gaussian process with a product covariance structure given by eq.~\eqref{equ::covarX}, its quadratic variation process is non-stochastic \citep[see exercise V.1.14][]{revuz99}. Hence,
\[
\E \dot \phi W^\circ(t) \dot \phi W^\circ(s) = \E W(\sigma^2(t)) W(\sigma^2(s)) = \sigma^2(t) \land \sigma^2(s).
\]
Theorem \ref{theorem::main} shows that $\dot \phi \mathbb{Z}$ is a continuous mean zero Gaussian process with a product covariance structure given by eq.~\eqref{equ::covarX}. As the distribution of a mean zero Gaussian process is determined by its covariance structure, this shows that defining $\mathbb{S}$ by $\dot \phi \mathbb{Z} = \mathbb{S}(s,\sigma^2(t))$ makes $\mathbb{S}(s,t)$ a Brownian Sheet on $[0,1] \times [0,\sigma^2(\tau)]$. Let $N$ be the limit variable of Theorem \ref{theorem::main}. 
As $\dot \phi W^\circ$ is continuous, its quadratic variation is also continuous, which makes its inverse $\sigma^2(t)$ continuous as well. Hence,
\[
N = \left( \sup_{0 \leq s \leq 1} \sup_{0 \leq t \leq \tau} \left| \mathbb{S}(s,\sigma^2(t)) \right| \right)^2 = \left( \sup_{0 \leq s \leq 1} \sup_{0 \leq t \leq 1} \left| \mathbb{S}(s,t \sigma^2(\tau)) \right| \right)^2.
\]
The time scaling property of the Brownian Sheet then shows that
\[
N = \sigma^2(\tau) \left( \sup_{0 \leq s \leq 1} \sup_{0 \leq t \leq 1} \left| \tilde{\mathbb{S}}(s,t) \right| \right)^2 = \sigma^2 \|\tilde{\mathbb{S}} \|_{[0,1]^2}^2
\]
where $\tilde{\mathbb{S}}$ is a Brownian Sheet on $[0,1]^2$.
\end{proof}

This leads directly to the following result concerning the Nelson--Aalen estimator. Its proof follows as a direct consequence of Theorem \ref{theorem::martingales} from the well-known fact that the Nelson--Aalen estimator is composed of H-differentiable maps \citep[Example 3.9.19]{vaart96} and has a Gaussian Martingale limit. We also note that a completely analogous corollary is also valid for the Kaplan--Meier estimator (see example 3.9.31 of \citet{vaart96} and Theorem IV.3.2 of \citet{andersen92}).
\begin{corollary}
Let $N_\eps$ be the last time the Nelson--Aalen estimator $\hat \Lambda_n$ is more than $\eps$ away from $\Lambda$ with respect to supremum distance and let 
\[
\sigma^2(t) = \int_{[0,t]} \frac{1 - \Delta \Lambda(z)}{P \{ Z \geq z\} } d \Lambda(z).
\]
Then
\begin{equation} \label{equ::nelsonaalenneps}
\eps^2 N_\eps \epswconv \sigma^2 \left( \sup_{0 \leq s \leq 1} \sup_{0 \leq t \leq 1} \left| \mathbb{S}(s,t) \right| \right)^2
\end{equation}
for a Brownian Sheet $\mathbb{S}$ on $[0,1]^2$ and where $\sigma^2 = \sigma^2(\tau)$.
\end{corollary}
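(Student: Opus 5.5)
The proof amounts to checking the hypotheses of Theorem \ref{theorem::main} and Theorem \ref{theorem::martingales} for the Nelson--Aalen estimator, and then identifying the variance function. Let $F_n$ be the bivariate empirical distribution of $(Z_i,\Delta_i)$. By \citet[Example 3.9.19]{vaart96}, $\hat\Lambda_n=\phi(F_n)$, where $\phi$ is the composition of three maps: the map $F\mapsto(\mathbb{H}^{uc},\bar{\mathbb{H}})$, which is linear and continuous; the inversion $\bar{\mathbb{H}}\mapsto 1/\bar{\mathbb{H}}$, which is H-differentiable on functions bounded away from zero on $[0,\tau]$; and the integration map $(A,B)\mapsto\int_{[0,t]}A\,\d B$, which is H-differentiable tangentially to a suitable set $\mathbb{D}_0$ of pairs with continuous first coordinate. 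By the chain rule, $\phi$ is H-differentiable at $P$ tangentially to a linear subspace $\mathbb{D}_0$ that supports the Brownian bridge limit. The relevant class $\mathcal{F}$ consists of indicators $1\{z\le t,\delta=1\}$ and $1\{z\ge t\}$ for $t\in[0,\tau]$. This class is VC, hence Donsker, and it is uniformly bounded, so assumptions 1--3 hold. Part (1) of Theorem \ref{theorem::main} therefore gives
\[
\eps^2N_\eps\epswconv\|\dot\phi\mathbb{Z}_s\|^2_{(0,1]\times[0,\tau]}.
\]
Since $\mathbb{D}_0$ is linear, this limit has the product covariance $(s_1\land s_2)\,\E\,\dot\phi W^\circ(t_1)\dot\phi W^\circ(t_2)$.

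The next step is to verify the martingale hypothesis of Theorem \ref{theorem::martingales}. Write $\dot\phi W^\circ(t)=\int_{[0,t]}\bar{\mathbb{H}}^{-1}\,\d W^{uc}-\int_{[0,t]}W^{\bar H}\bar{\mathbb{H}}^{-2}\,\d\mathbb{H}^{uc}$. Rather than checking the martingale property pathwise, I would compute its covariance directly. This is the standard calculation from \citet[Example 3.9.19]{vaart96} and \citet{andersen92}, and it shows that the covariance is $\sigma^2(t_1\land t_2)$ with
\[
\sigma^2(t)=\int_{[0,t]}\frac{1-\Delta\Lambda(z)}{P\{Z\ge z\}}\,\d\Lambda(z).
\]
A mean zero Gaussian process with independent increments and covariance $\sigma^2(t_1\land t_2)$ is a square integrable martingale with deterministic bracket $\sigma^2(t)$. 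By the product structure, the same holds for each $\dot\phi\mathbb{Z}_s$ after multiplying by $s$.

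\textbf{Main obstacle: continuity.} Theorem \ref{theorem::martingales} assumes a continuous local martingale, but when $\Lambda$ has atoms, $\sigma^2(t)$ jumps and the Dambis--Dubins--Schwarz argument does not directly apply. For continuous $\Lambda$, the theorem applies as stated. In general I would avoid the time change altogether. Since $\dot\phi\mathbb{Z}_s(t)$ has covariance $(s_1\land s_2)(\sigma^2(t_1)\land\sigma^2(t_2))$, it equals in law $\mathbb{S}(s,\sigma^2(t))$ for a Brownian sheet $\mathbb{S}$ on $[0,1]\times[0,\sigma^2(\tau)]$, as in the proof of Theorem \ref{theorem::martingales}. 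Then
\[
\sup_{s,t}|\mathbb{S}(s,\sigma^2(t))|\le\sup_{s}\sup_{u\le\sigma^2(\tau)}|\mathbb{S}(s,u)|.
\]
Equality of the two suprema requires the range of $\sigma^2(\cdot)$ to be dense in $[0,\sigma^2(\tau)]$. So I would either restrict the statement to continuous $\Lambda$, which is the implicit setting of the corollary, or note that equality holds whenever $\Lambda$ is continuous. Under that continuity, Brownian sheet scaling converts the right-hand side to $\sigma^2\|\mathbb{S}\|^2_{[0,1]^2}$ with $\sigma^2=\sigma^2(\tau)$, which gives \eqref{equ::nelsonaalenneps}.
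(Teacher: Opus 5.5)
Your route is the paper's: Theorem \ref{theorem::main}, using the H-differentiability of the Nelson--Aalen map from Example 3.9.19 of van der Vaart and Wellner, followed by the step of Theorem \ref{theorem::martingales}. Your covariance-matching replacement for the Dambis--Dubins--Schwarz step is exactly what the proof of that theorem does anyway.

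Your continuity caveat is correct, and it exposes a real defect in the paper's one-line proof. Theorem \ref{theorem::martingales} requires a \emph{continuous} martingale, yet the corollary keeps the factor $1-\Delta\Lambda$. When $\Lambda$ has an atom, $\sigma^2(\cdot)$ jumps and the limit becomes $\sup_s\sup_{u\in\sigma^2([0,\tau])}|\mathbb{S}(s,u)|^2$. This is pathwise dominated by $\sup_s\sup_{u\le\sigma^2(\tau)}|\mathbb{S}(s,u)|^2$, which equals $\sigma^2\|\mathbb{S}\|^2_{[0,1]^2}$ in law, and it differs from that quantity with positive probability. So the stated limit is false in that case, and the corollary holds only for continuous $\Lambda$, where $1-\Delta\Lambda\equiv 1$.

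One small correction to your first paragraph: you do not want a tangent set of pairs with continuous first coordinate. The integration map $(A,B)\mapsto\int A\,\d B$ (Lemma 3.9.17 there) is H-differentiable with no continuity restriction on the direction, because c\`adl\`ag directions can be uniformly approximated by step functions. You need this. Continuity of $\Lambda$ does not stop $\bar{\mathbb{H}}$, and hence $W^{\bar H}$, from jumping at atoms of the censoring distribution. The limit $\dot\phi W^\circ$ itself is still continuous once $\Lambda$, and hence $\mathbb{H}^{uc}$, is continuous, so your argument goes through.
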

This can also be seen independently when working directly with the heuristics leading to Theorem \ref{theorem::main} through
\[
\mathbb{Y}_m(s,t) = \sqrt{m} (\hat \Lambda_{\floor{ms}}(t) - \Lambda(t))
\]
using martingale calculus. Using theory presented in \citet{andersen92}, convergence of $\mathbb{Y}_m(s,t)$ to the Brownian Sheet $W(s,\sigma^2(t))$ as $m \rightarrow \infty$ can be proven. However, such a proof would use the fine structure of $\phi$. In contrast, the above corollary is a trivial consequence of Theorem \ref{theorem::martingales}, and only rests on the well-known martingale structure of $\dot \phi \mathbb{Z}_s$.

In the setting of Theorem \ref{theorem::martingales}, we can reach tight and general bounds for the $m$ of eq.~\eqref{equ::fixedwidth}. 
Let $b = \sqrt{\lambda_\alpha}/\sigma$ where $\lambda_\alpha$ is the upper $\alpha$ quantile of $\sigma^2 \|S\|_{[0,1]^2}$. We have that
\begin{equation} \label{equ::approximations}
P (  \| B_s\|_{[0,1]} > b ) \leq P ( \| \mathbb{S}(s,t) \|_{[0,1]^2} > b ) = \alpha \leq 2 P ( \| B_s \|_{[0,1]} > b),
\end{equation}
where $B$ is Brownian motion on $[0,1]$ and where the upper bound is analogous to Proposition \ref{proposition::inequality}. Hence,
\[
A^\inv(\sqrt{\alpha}) \leq b \leq A^\inv(\sqrt{\alpha}/2)
\]
where
\[
A(\lambda) = 1 - \sum_{k = -\infty}^\infty (-1)^k \left[  \Phi((2k+1) \lambda) - \Phi((2k-1) \lambda) \right]
\]
is the cumulative distribution function of $\| B_s \|_{[0,1]}$ given in Section 2.7 of \citet{sen81}. As $m = \floor{\lambda_\alpha/\eps^2}$, we get that
\[
\sigma^2 A^\inv(\sqrt{\alpha})^2/\eps_0^2 \leq m \leq \sigma^2 A^\inv(\sqrt{\alpha}/2)^2/\eps_0^2 + 1 .
\]
One may improve on this bound by approximating the distribution of $\| \mathbb{S}(s,t) \|_{[0,1]^2}$ directly instead of using eq.~\eqref{equ::approximations}.

\subsection{An application to risk averse stochastic problems} \label{section::stochproc}
As discussed in \citet{shapiro2008lectures}, there is a rich class of applications in operations research where one encounters problems of the form
\begin{equation} \label{equ::stochproblemformulation}
\min_{x \in X} g(x)
\end{equation}
 where $g(x) = \E G(x, \xi)$ is the expected loss of a loss-function $G$ defined in terms on a random vector $\xi$ which has a known distribution and is supported on a set $\Xi \subseteq \mathbb{R}^d$. Often $g(x)$ is difficult to compute, but $G(x, \xi)$ is simpler to compute, while $\xi$ is possible to simulate. As numerical optimization of eq.~\eqref{equ::stochproblemformulation} requires many evaluations of $g(x)$ at different values of $x$, a well-motivated procedure is to approximate $g(x)$ by
 \[
 \hat g_n(x) = \frac{1}{n} \sum_{i=1}^n G(x, \xi_i)
 \]
where $\xi_1, \xi_2, \ldots, \xi_n$ are $iid$ realizations of $\xi$. The so-called sample average approximation to the stochastic problem of eq.~\eqref{equ::stochproblemformulation} is then
 \begin{equation} \label{equ::stochproblemformulationestimator}
 \min_{x \in X} \hat g(x).
 \end{equation}
\citet{shapiro08} derives limit theorems for the sample average approximation for certain minimax stochastic problems by showing that under certain assumptions that are natural in many operation research problems, the estimator of eq.~\eqref{equ::stochproblemformulationestimator} is a H-differentiable functional of the empirical distribution. Under uniqueness assumptions on the optimization problem, the functional delta method then shows that $\sqrt{n} ( v_n - v)$ is asymptotically normal, where $v_n = \min_{x \in X} \hat g(x)$ and $v = \min_{x \in X} g(x)$. For concreteness, let us work with the following risk averse stochastic problem, given by
\[
\min_{x \in X} \rho_\lambda \left[ G(x, \xi) \right]
\]
where $G : \mathbb{R}^m \times \Xi$ and $\rho_\lambda(Z) := \E Z + \lambda \E [ Z - \E Z]_+$ is the so-called absolute semideviation risk measure with $\lambda \in [0,1]$.
A most fundamental problem for using sample average approximations is how to choose $n$. First of all, one needs to guarantee that approximating $g(x)$ by $\hat g(x)$ does not distort the minimum value too much. Secondly, one needs to make sure that the size of $n$ that guarantees such a sufficient precision level is not so large as to exceed the computational burden of working work directly with $g(x)$. Through assuming an exponential bound of the moment generating function of $\xi$, \citet{shapiro08} provides a formula for $n(\alpha, \eps)$ such that for a given $\alpha > 0$,
\begin{equation} \label{equ::shapirobound}
P ( |\hat v_{n(\alpha, \eps)} - v| < \eps ) \geq 1 - \alpha
\end{equation}
where 
\begin{equation} \label{equ::nalphaeps}
n(\alpha, \eps) = \frac{C_1}{\eps^2} \left( \log \frac{C_2}{\eps} + \log \alpha^\inv \right)
\end{equation}
for constants $C_1, C_2$ depending on $G$, $X$ and the distribution of $\xi$ only. Without assuming exponential bounds for the moment generating function of $\xi$, Theorem \ref{theorem::main} identifies the limit distribution of $\eps^2 N_\eps = \eps^2 \sup \{ n : | v_n - v | > \eps \}$. 
Assuming the uniqueness conditions stated in \citet{shapiro08}, $v_n$ is asymptotically Gaussian, so that Remark \ref{remark::singleton} and the computations of Section \ref{subsection::martingale} shows that
\begin{equation} \label{equ::nnewbound}
n \geq N(\alpha, \eps) := \sigma^2 A^\inv(\sqrt{\alpha}/2)^2/ \eps^2
\end{equation}
implies that
\begin{equation} \label{equ::newbound}
P ( |\hat v_{m} - v| < \eps \text{ for all } m \geq n)
\end{equation}
is close to $1 - \alpha$ for sufficiently small $\eps$. Here $\sigma^2$ is the asymptotic variance of $\sqrt{n} ( v_n - v)$ which is given in Equation 3.11 of \citet{shapiro08} as
\[
\sigma^2 = \var \left\{ G(x^*, \xi) + \lambda \alpha^* \left[ G(x^*, \xi, - \E G(x^*, \xi) \right]_+ + \lambda(1-\alpha^*) \left[ -G(x^*, \xi) - \E G(x^*, \xi) \right]_+ \right\}
\]
defined in terms of
\[
x^* = \argmin_{x \in X} \rho_\lambda \left[ G(x, \xi) \right], \qquad \alpha^* = P(G(x^*, \xi) \leq \E G(x^*, \xi)).
\]
This result is valid under much less stringent assumptions than that of \citet{shapiro08}, but is asymptotic in contrast to the finite sample bound of $n(\alpha, \eps)$ in eq.~\eqref{equ::nalphaeps}. It is interesting to note that $n(\alpha, \eps)$ is larger than $N(\alpha, \eps)$ by a factor of $\log \eps^{-1}$. This seems to originate from the coarseness of the exponential inequalities used in \citet{shapiro08}. 
\section{Further applications}
This section surveys other statistically motivated applications of Theorem \ref{theorem::main}.

\subsection{The multivariate case}
Although we have suppressed it from our notation, Theorem \ref{theorem::main} is valid also in the multivariate case. Given a norm $\| \cdot \|_{\mathbb{R}^d}$ on $\mathbb{R}^d$, such as the Euclidean or the maximum norm, we can work with
\[
l^\infty(\mathcal{E}) = \left\{ f \in M(\mathcal{E} \mapsto \mathbb{R}^d) : \sup_{e \in \mathcal{E}} \| f(e) \|_{\mathbb{R}^d}  < \infty \right\}
\]
where $M(\mathcal{E} \mapsto \mathbb{R}^d)$ is the space of all functions from $\mathcal{E}$ to $\mathbb{R}^d$.
Suppose that $\hat \theta_{1,n} \nasconvstar \theta_1$ and $\hat \theta_{2,n} \nasconvstar \theta_2$ are two sequences of estimators pertaining to the regularity conditions of Theorem \ref{theorem::main} and let
\begin{multline*}
N_\eps := \sup \left\{ n : \left\| \hat \theta_{1,n} - \theta_1 \right\|  > \eps \text{ and } \left\| \hat \theta_{2,n} - \theta_2 \right\|  > \eps \right\} \\
= \sup \left\{ n : \max \left\{ \left\| \hat \theta_{1,n} - \theta_1 \right\| , \left\| \hat \theta_{1,n} - \theta_1 \right\| \right\}  > \eps \right\}
\end{multline*}
be the last time an error larger than $\eps$ is committed both for $\hat \theta_{1,n}$ and $\hat \theta_{2,n}$.
As the map $F \mapsto (F, F)$ is linear and hence trivially H-differentiable, the chain-rule of H-differentiability and Theorem \ref{theorem::main} show that
\[
\eps^2 N_\eps \epswconvstar \sup_{(i,s,e) \in \{1,2\} \times (0,1] \times \mathcal{E}} | \mathbb{Z}_{i,s}(e) |^2 = \| \mathbb{Z}_{s}(e) \|^2_{(0,1] \times \mathcal{E}}
\]
for a vector-valued Kiefer-M\"{u}ller process $\mathbb{Z}_{s} = \left( \mathbb{Z}_{1,s}, \mathbb{Z}_{2,s} \right)$. Note that $\mathbb{Z}_{1,s}$ and $\mathbb{Z}_{2,s}$ are independent if $\sqrt{n} ( \hat \theta_{1,n} - \theta_1 )$ is asymptotically independent of $\sqrt{n} ( \hat \theta_{2,n} - \theta_2 )$.

\subsection{The number of $\eps$-misses and two new variables}
So far we have only worked with the variable $N_\eps$. However, weak convergence of several statistically interpretable variables also follow from Lemma \ref{lemma::converge}.

\begin{corollary} \label{corollary::numberoferrors}
Let 
\[
Q_\eps = \sum_{n=1}^\infty I \{ \| \phi(P_n) - \phi(P) \| \geq \eps \}
\]
be the number of errors larger than $\eps$. Further let
\[
R_\eps(a,b) = \frac{ \sum_{n=1}^\infty I \{ a \eps \leq \Arrowvert \phi(P_n) - \phi(P) \Arrowvert \leq b \eps \}}{\sum_{n=1}^\infty I \{ \Arrowvert \phi(P_n) - \phi(P) \Arrowvert \geq \eps \}}
\]
be the ratio of errors of sizes contained in $[a \eps, b \eps]$ relative to all errors larger than $\eps$ and
\[
M_\eps = \frac{ \sum_{n=1}^\infty \Arrowvert \phi(P_n) - \phi(P) \Arrowvert I \{ \Arrowvert \phi(P_n) - \phi(P) \Arrowvert \geq \eps \}}{\sum_{n=1}^\infty I \{ \| \phi(P_n) - \phi(P) \|_\mathcal{E} \geq \eps \}},
\]
the mean size of errors larger than $\eps$. We then have that
\[
\eps^2 Q_\eps \epswconv \int_{0}^\infty I \left\{ \| \dot \phi \mathbb{Z}_s  \|_{\mathcal{E}} \geq 1 \right\} \, \d s.
\]
Denoting the limit variable of $\eps^2 Q_\eps$ by $Q$, we further have
\[
R_\eps(a,b) \epswconv Q^\inv \int_0^\infty I \left\{ a \leq \| \dot \phi \mathbb{Z}_s \|_\mathcal{E} \leq b \right\} \, \d s, 
\]
which we will call $R(a,b)$. Finally, we also have
\[
\eps^{-1} M_\eps \epswconv Q^\inv \int_0^\infty \| \dot \phi \mathbb{Z}_s \|_\mathcal{E} I \left\{ \| \dot \phi \mathbb{Z}_s \|_\mathcal{E} \geq 1 \right\} \, \d s.
\]
\end{corollary}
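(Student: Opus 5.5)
We plan to reduce all three statements to the process convergence of Lemma \ref{lemma::converge}, combined with a continuous mapping argument on the time-rescaled process. Set $m = \floor{1/\eps^2}$ and $\mathbb{Y}_m(s,e) = \sqrt{m}\,[\phi(P_{\floor{ms}}) - \phi(P)](e)$. This time we let $s$ range over $(0,\infty)$ rather than $[1,\infty)$; the small-$s$ part is handled by a separate tail argument, described below.

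Since $n \mapsto \|\phi(P_n)-\phi(P)\|$ is constant on the blocks $n = \floor{ms}$, a change of variables gives
\[
\eps^2 Q_\eps = \eps^2 m \int_0^\infty I\{ \|\mathbb{Y}_m(s,\cdot)\|_\mathcal{E} \geq \sqrt{m}\,\eps\}\, \d s .
\]
Here $\eps^2 m \to 1$ and $\sqrt{m}\,\eps \to 1$. The same substitution turns the numerators of $R_\eps(a,b)$ and $\eps^{-1}M_\eps$ into
\[
\int_0^\infty I\{a \leq \|\mathbb{Y}_m(s)\|\leq b\}\,\d s \qquad\text{and}\qquad \int_0^\infty \|\mathbb{Y}_m(s)\|\, I\{\|\mathbb{Y}_m(s)\|\geq 1\}\,\d s,
\]
up to the negligible factors $\eps^2 m$ and $\sqrt m\,\eps$. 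The denominators are in each case the $Q$-integral. So it suffices to prove joint weak convergence of the vector of three integral functionals of $\mathbb{Y}_m$. The ratio statements then follow by the continuous mapping theorem, provided $Q>0$ almost surely. This holds because $\|\dot\phi\mathbb{Z}_s\|_\mathcal{E}$ exceeds $1$ on a set of positive measure almost surely, by continuity and Brownian scaling in $s$.

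By Lemma \ref{lemma::converge}, applied on $[\delta,\infty)$ for each fixed $\delta>0$ (the lemma's proof works equally with $[\delta,\infty)$ in place of $[1,\infty)$), we have $\mathbb{Y}_m \nwconvstar \dot\phi(s^{-1}\mathbb{Z}_s)$ on $l^\infty([\delta,\infty)\times\mathcal{E})$. We then apply the extended continuous mapping theorem to functionals of the form $h \mapsto \int_\delta^T I\{\|h(s)\|_\mathcal{E}\geq 1\}\,\d s$. These are not continuous everywhere, but they are continuous at every $h$ for which the Lebesgue measure of $\{s : \|h(s)\|_\mathcal{E} = 1\}$ is zero (resp.\ of $\{\|h(s)\| \in\{a,b\}\}$). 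For the Gaussian limit this level set has measure zero almost surely. This follows by Fubini, since for each fixed $s$ the variable $\|\dot\phi \mathbb{Z}_s\|_\mathcal{E}$, a norm of a nondegenerate Gaussian process, has a continuous distribution; nondegeneracy can be assumed, or else the claim is trivial. Finally, time reversal as in the proof of Theorem \ref{theorem::main} converts $s^{-1}\mathbb{Z}_s$ on $[1,\infty)$ into the Kiefer--M\"uller form appearing in the statement, with the Jacobian absorbed into the $\d s$ integration.

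The main obstacle is truncation: we must control the integrals over $s\in(0,\delta)$ and $s\in(T,\infty)$ uniformly in $m$, so that a standard approximation argument (Theorem 4.2 of Billingsley type, or Lemma 1.10.2 of \citet{vaart96}) lets $\delta\to0$ and $T\to\infty$. The region $s<\delta$ contributes at most $\delta$ to the counting integrals, which is trivial. The $M$-numerator needs the integrand bounded, and we would get this from boundedness of $\|\phi(P_n)-\phi(P)\|$ for small $n$, or else restrict to $n\geq 1$ and use $\eps^{-1}\cdot(\text{finite sum})\cdot \eps^2 \to 0$. For $s>T$, we use
\[
\int_T^\infty I\{\|\mathbb{Y}_m(s)\|\geq 1\}\,\d s \leq \eps^2 N_\eps\, I\{\eps^2 N_\eps > T\}.
\]
By Theorem \ref{theorem::main}, $\eps^2 N_\eps$ is tight, so $P^*(\eps^2 N_\eps > T)$ is small uniformly for large $T$. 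For the $M$-integrand we additionally bound $\|\mathbb{Y}_m\|$ on $[T,\infty)$ by $\sup_s\|\mathbb{Y}_m(s)\|$, which is tight by the same theorem. This handles all three functionals jointly.
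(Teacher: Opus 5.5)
Your treatment of $Q_\eps$ and $R_\eps(a,b)$ follows the paper's route: the substitution $n=\floor{ms}$, Lemma~\ref{lemma::converge} on $[\delta,\infty)$, continuous mapping, and the trivial bound $\delta$ for $s<\delta$. At one point you are more careful than the paper. The paper calls $D\mapsto\int_l^\infty I\{\|D_s\|\geq 1\}\,\d s$ continuous, which it is not. Your observation that this functional is continuous only at paths whose level set at $1$ is Lebesgue-null, combined with Fubini, is what actually justifies the extended continuous mapping theorem.

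The genuine gap is the region $s<\delta$ in the $M_\eps$ numerator. There the integrand $\|\mathbb{Y}_m(s)\|_\mathcal{E}=\sqrt m\,\|\phi(P_{\floor{ms}})-\phi(P)\|_\mathcal{E}$ is not bounded uniformly in $m$. It is of order $s^{-1/2}$ in probability, and of order $\sqrt m$ near $s=1/m$. A bound $\|\phi(P_n)-\phi(P)\|_\mathcal{E}\le K$ only gives $\eps\sum_{n\le\delta/\eps^2}K=K\delta/\eps\to\infty$. Discarding a fixed block $n\le n_0$ leaves the roughly $\delta/\eps^2$ indices in $(n_0,\delta/\eps^2]$ untouched. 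What you actually need is
\[
\lim_{\delta\to 0^+}\limsup_{\eps\to0^+}P^*\Bigl(\eps\sum_{n_0<n\le\delta/\eps^2}\|\phi(P_n)-\phi(P)\|_\mathcal{E}>\eta\Bigr)=0
\]
for every $\eta>0$ and suitable $n_0$, that is, control of the $n^{-1/2}$ rate uniformly over this whole range of $n$.

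Neither Lemma~\ref{lemma::converge} nor Theorem~\ref{theorem::main} provides this. They only describe indices of order $m$ or larger. The weak convergence of $\sup_{n\ge k}\sqrt k\,\|\phi(P_n)-\phi(P)\|_\mathcal{E}$ comes with no tail bound uniform in $k$, so it cannot be summed over the growing number (about $\log_2(\delta/\eps^2)$) of dyadic blocks.

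One way to close the gap is an extra local Lipschitz condition: $\|\phi(Q)-\phi(P)\|_\mathcal{E}\le C\|Q-P\|_\mathcal{F}$ whenever $\|Q-P\|_\mathcal{F}\le\eta_0$. Consider the event $\{\sup_{n>n_0}\|P_n-P\|_\mathcal{F}\le\eta_0\}$, whose outer probability tends to one as $n_0\to\infty$. On it, the sum is at most $C\eps\sum_{n\le\delta/\eps^2}\|P_n-P\|_\mathcal{F}$. That bound has outer expectation $O(\sqrt\delta)$ because $\sup_n\E^*\|\sqrt n(P_n-P)\|_\mathcal{F}<\infty$ for a Donsker class. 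Mere H-differentiability at $P$ does not give this. The paper does not supply it either: it proves only the $Q_\eps$ statement and asserts that the other cases ``follow similarly''. That is precisely what breaks for $M_\eps$, since the trivial bound $1$ on the integrand is no longer available.

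Separately, drop your closing time-reversal step. The process $\dot\phi\mathbb{Z}_s$, $s\in(0,\infty)$, in the statement must be read as the limit $\dot\phi(s^{-1}\mathbb{Z}_s)$ of Lemma~\ref{lemma::converge}. That is what your substitution produces and what the paper uses. Inverting time with $u=1/s$ turns $\int_0^\infty I\{\|\dot\phi(s^{-1}\mathbb{Z}_s)\|_\mathcal{E}\ge1\}\,\d s$ into $\int_0^\infty I\{\|\dot\phi\tilde{\mathbb{Z}}_u\|_\mathcal{E}\ge1\}\,u^{-2}\,\d u$. The weight $u^{-2}$ cannot be absorbed: without it the integral is almost surely infinite for a nondegenerate Kiefer--M\"uller process. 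A minor point as well: for $a<1$, your large-$s$ bound for the $R$ numerator needs $N_{a\eps}$ in place of $N_\eps$.
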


\begin{proof}
We will only consider $Q_\eps$, as the other cases follow similarly. 
Let us first show that for 
\[
Q_\eps(l) = \sum_{n=\floor{l/\eps^2}}^\infty I \{ \| \phi(P_n) - \phi(P) \| \geq \eps \} 
\]
we have
\[
\eps^2 Q_\eps(l) \epswconv \int_{l}^\infty I \left\{ \| \dot \phi \mathbb{Z}_s  \|_{\mathcal{E}} \geq 1 \right\} \, \d s
\]
each $l > 0$ and we afterwards let $l \rightarrow 0^+$. Indeed, as
\[
\sum_{n=\floor{l/\eps^2}}^\infty I \{ \| \phi(P_n) - \phi(P) \| \geq \eps \} = \int_{\floor{l/\eps^2}}^\infty I \{ \| \phi(P_{\floor sn}) - \phi(P) \| \geq \eps \} \, \d s
\]
a change of variables gives
\[
\eps^2 Q_\eps(l) = \int_{l}^\infty I \{ \sqrt{m} \| \phi(P_{\floor{ms}}) - \phi(P) \| \geq 1 \} \, \d s + o_{P^*}(1) = Q_l(\mathbb{X}_n) + o_{P^*}(1),
\]
where $Q_l$ is the mapping
\[
D \mapsto \int_l^\infty I \{ \sup_{f \in \mathcal{F}} | D_s(f) | \geq 1 \} \, \d s.
\]
As $Q_l$ is a continuous mapping in $l^\infty([l,\infty) \times \mathcal{E})$, the claimed limit follows from the continuous mapping Theorem and a trivial extension of Lemma \ref{lemma::converge} to prove convergence on $l^\infty([l,\infty) \times \mathcal{E})$ (when $l > 0$) instead of $l^\infty([1,\infty) \times \mathcal{E})$.
The full convergence follows if we show that for each $\delta > 0$ we have
\[
\lim_{c \rightarrow \infty} \limsup_{n \rightarrow \infty} P^* \left( \sup_{l \leq 1/c} | D_l(\mathbb{X}_n) - D_0(\mathbb{X}_n)| \geq \delta \right) = 0.
\]
The linearity of the integral and subadditivity of outer measures implies that
\begin{align*}
P^* \left( \sup_{l \leq 1/c} | Q_l(\mathbb{X}_n) - Q_0(\mathbb{X}_n)| \geq \delta \right) &= P^* \left( \int_{0}^{1/c} I \{ \sqrt{n} \| \phi(P_{\floor{ns}}) - \phi(P) \| \geq 1 \} \, \d s \geq \delta \right)\\
&\leq P^* \left( c^\inv I \{ \sup_{0 < s \leq 1/c} \sqrt{n} \| \phi(P_{\floor{ns}}) - \phi(P) \| \geq 1 \} \geq \delta \right) \\
& = P^* \left( I \{ \sup_{0 < s \leq 1/c} \sqrt{n} \| \phi(P_{\floor{ns}}) - \phi(P) \| \geq 1 \} \geq c \delta \right) \\
\end{align*}
which is zero for $c \delta > 1$.
\end{proof}

\begin{figure}[!ht]
  \begin{center} 
    \includegraphics[height=4in, angle=90]{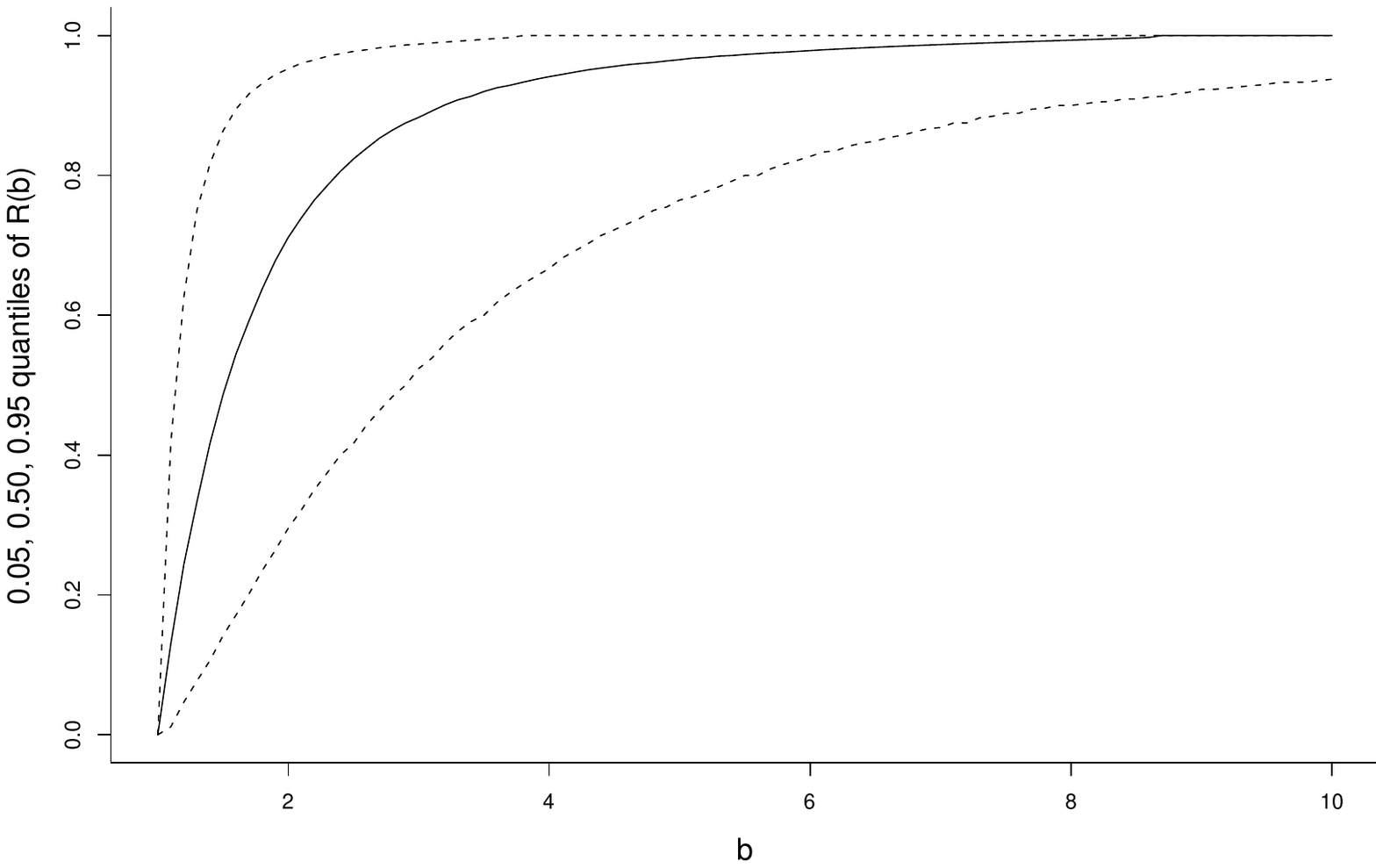}
    \caption{Median value and lower and upper 0.05 quantiles of the variable $R(1,b)$ (the limit of $R_\eps(1,b)$) for a range of $b$ values for the simple average.}
    \label{fig:figAA}
  \end{center}
\end{figure}

While \citet{hjort92} worked with $Q_\eps$, both $M_\eps$ and $R_\eps$ are new. Note that $R_\eps$ does not require a normalization with respect to $\eps$ to gain a weak limit, and as such has a very direct interpretation. For an illustration of the $R_\eps$ result, Figure \ref{fig:figAA} displays the median value and the lower and upper 0.05 quantiles of the variable $R(1,b)$, the limit of $R_\eps(1,b)$, for a range of $b$ values (these calculations relate to the case of a one-dimensional simple average). We learn e.g.~that about half of all errors ever committed  above $\eps$ are below $1.53\,\eps$, the rest above $1.53\,\eps$. Amazingly, this fact is established even though we may never observe or even simulate the underlying $R_\eps(1,b)$ variables.

\subsection{Measures of asymptotic relative efficiency}
Suppose that $\phi_1(P_n)$ and $\phi_2(P_n)$ are H-differentiable statistical functionals both estimating $\phi(P)$. A concrete example is the median versus the mean when the density of $P$ is symmetric. Let $N_{i,\eps}$ be the last time $\phi_i(P_n)$ is further than $\eps$ away from $\phi(P)$. A natural measure for the asymptotic relative efficiency of $\phi_1(P_n)$ compared to $\phi_2(P_n)$ is then 
\begin{equation*}
\text{ARE} := M_1/M_2
\end{equation*}
where $M_i$ is the median of $N_i$, the limit variable of $\eps^2 N_{i,\eps}$ as $\eps \rightarrow 0^+$. 
Recall that $\phi_1(P_n)$ and $\phi_2(P_n)$ is implicitly dependent on which space $P_n$ is defined. Indeed, suppose $\phi_1$ and $\phi_2$ are functionals of $l^\infty(\mathcal{F}_1)$ and $l^\infty(\mathcal{F}_2)$. If $\mathcal{F}_1 \neq \mathcal{F}_2$, a more natural extension of the measure of variance proposed in Remark \ref{remark::sigma2} is
\begin{equation} \label{equ::are2}
\text{ARE} := \left( \frac{M_1}{\text{Median} \|\mathbb{Z}_s \|_{(0,1] \times \mathcal{F}_1}^2} \right)/ \left(\frac{M_2}{\text{Median} \|\mathbb{Z}_s \|_{(0,1] \times \mathcal{F}_2}^2} \right)
\end{equation}
If $\mathcal{F}_1 = \mathcal{F}_2$, the two measures agree.

These asymptotic relative efficiency measures do not distinguish between estimators with the same H-differential. To distinguish between such cases, a second order perspective is required. The $\eps^2 Q_\eps$-limit result of Corollary \ref{corollary::numberoferrors} may be the starting-point for providing a.r.e measures when $\eps^2 N_{1,\eps}$ and $\eps^2 N_{1,\eps}$ have the same limit. Indeed, let $Q_{i,\eps}$ be the number of errors committed by $\phi_i(P_n)$ for $i=1,2$. As done in \citet{hjort95} and \citet{hjort93} for estimators connected with averages, one can work with the asymptotic relative deficiency measure
\[
\text{ARD} = \lim_{\eps \rightarrow 0^+} \E \{ Q_{1,\eps} - Q_{2,\eps} \},
\]
which in such cases provides more detail than the a.r.e measure of eq.~\eqref{equ::are2}.

\section*{Acknowledgements}
We are grateful to Jon A. Wellner for helpful comments which lead to the correction of an inequality in Section \ref{section::sequential} that lead to Proposition \ref{proposition::inequality} and to Alex Koning for hospitality and for discussions on Gaussian tail bounds and the paper \citet{koning03} while the first author visited the Econometric institute of Erasmus University. We would also like to thank an anonymous referee that suggested the approach of Remark \ref{remark::anonymous}, and comments that led to improvements of the paper.

%\bibliography{../../../../tex/references}
%\bibliographystyle{../../../../tex/biometrika.bst}

\end{document}